\documentclass[12pt]{article}
\usepackage{mathrsfs}
\usepackage{amsmath}
\usepackage{amsmath,amsthm,amssymb,amscd}
\usepackage{latexsym}
\usepackage[colorlinks,linkcolor=blue,anchorcolor=blue,citecolor=blue,CJKbookmarks=True]{hyperref}
\usepackage[numbers,sort&compress]{natbib}
\usepackage{cases}
\usepackage{makecell}
\usepackage{multirow}
\usepackage{geometry}
\usepackage{tabularx}
\usepackage{graphicx}
\usepackage{tikz}
\usepackage{tikz-cd}

\allowdisplaybreaks
\newtheorem{theorem}{Theorem}[section]
\newtheorem{corollary}[theorem]{Corollary}
\newtheorem{lemma}[theorem]{Lemma}
\newtheorem{example}[theorem]{Example}
\newtheorem{proposition}[theorem]{Proposition}
\newtheorem{question}[theorem]{Question}
\theoremstyle{definition}
\newtheorem{definition}[theorem]{Definition}
\newtheorem{remark}[theorem]{Remark}

\numberwithin{equation}{section}

\begin{document}
\begin{center}
{\large  \bf On $\ast$-Reversible and Generalized $\ast$-Reversible Rings}\\
\vspace{0.8cm}   Huaxi Chen$^{a}$,  Long Wang$^{b}$and Honglin Zou$^{c}$\footnote{Corresponding author. Email: honglinzou@sina.com or lwangmath@yzu.edu.cn} \\
\vspace{0.5cm} {\small $^{a}$School of Mathematical Sciences, Bengbu University, Bengbu, China\\
\small $^{b}$School of Mathematical Sciences, Yangzhou University, Yangzhou,  China\\
\small $^{c}$College of Basic Science, Zhejiang Shuren University, Hangzhou, P. R. China}
\end{center}

\bigskip

{ \bf  Abstract:}  \leftskip0truemm\rightskip0truemm
Let $R$ be a $\ast$-ring with $a,b\in R$. A ring $R$ is said to be $\ast$-reversible if $ab=0$ implies $b^{\ast}a=0$.
In this paper, we first establish several new characterizations of $\ast$-reversible rings and reversible rings.
In particular, we prove that $\ast$-reversible rings coincide with $\ast$-symmetric rings.
Using these characterizations, we introduce two new classes of generalized $\ast$-reversible rings: pro-$\ast$-reversible rings and nil-$\ast$-reversible rings.
A ring $R$ is called pro-$\ast$-reversible if $ab\in P(R)$ implies $b^{\ast}a\in P(R)$,
and $R$ is nil-$\ast$-reversible if for every $c\in N(R)$, $cb=0$ yields both $b^{\ast}c=0$ and $cb^{\ast}=0$.
The basic properties and characterizations of pro-$\ast$-reversible and nil-$\ast$-reversible rings are investigated.
The interrelationships among all these ring classes are considered. The related examples to distinguish these rings are constructed.
\\{  \textbf{Keywords:}} reversible rings; $\ast$-reversible rings; $\ast$-symmetric rings; pro-$\ast$-reversible rings;
nil-$\ast$-reversible rings.
\\\noindent { \textbf{2010 Mathematics Subject Classification:}} 16W10, 16U80, 16N40.
 \bigskip

\section{Introduction}

Reversible rings and their generalizations constitute an important class of rings in noncommutative ring theory.
Recall that a ring $R$ is called reversible if $ab=0$ for all $a,b\in R$ implies $ba=0$.
This notion generalizes commutative rings and reduced rings, and its structural properties,
characterizations and related extensions have attracted extensive attention from algebra scholars over the past decades.
Lambek \cite{L1} introduced symmetric rings as another generalization of reduced rings,
and Marks \cite{M1} systematically studied the basic properties and interrelationships between symmetric rings and reversible rings. Later, numerous generalized symmetric and reversible rings, such as $e$-symmetric rings, weak symmetric rings and
$(g,e)$-symmetric rings, were proposed and characterized in \cite{KUHH1,MW1,MW2,MW3,MW5,OH1,MW4}.

With the introduction of involution structures to rings,
Wang et al. \cite{WFW1} put forward the concept of $\ast$-symmetric rings.
A $\ast$-ring $R$ is said to be $\ast$-symmetric if $abc=0$ yields $acb^{\ast}=0$ for all $a, b, c\in R$.
It is known that every $\ast$-symmetric ring is symmetric,
and preliminary equivalent characterizations of $\ast$-symmetric rings were established in \cite{WFW1}.
However, the equivalence between $\ast$-reversible rings and $\ast$-symmetric rings has not been rigorously verified in existing literature, and there is a lack of systematic research on generalized $\ast$-reversible rings constrained by projections and nilpotents.

Motivated by the characterization of reversible rings in terms of idempotents and projections,
we first define pro-$\ast$-reversible rings as a projection-based generalization of $\ast$-reversible rings:
a $\ast$-ring $R$ is pro-$\ast$-reversible if $ab\in P(R)$ implies $b^{\ast}a\in P(R)$ for all $a,b\in R$,
where $P(R)$ denotes the set of all projections of $R$.
Meanwhile, by restricting the zero-divisor condition to nilpotents,
we introduce nil-$\ast$-reversible rings, requiring that $ab=0$ yields both $ab^{\ast}=0$ and $b^{\ast}a=0$ whenever $a\in N(R)$ and $b\in R$, with $N(R)$ standing for the set of nilpotents of $R$.

In this paper, we systematically study the above three classes of $\ast$-rings.
First, we give several new equivalent characterizations of reversible rings and $\ast$-reversible rings,
and prove that the class of $\ast$-reversible rings coincides exactly with the class of $\ast$-symmetric rings.
Second, we investigate the basic properties of pro-$\ast$-reversible rings,
clarify the implications among pro-$\ast$-reversible rings, $\ast$-reversible rings and reversible rings,
and construct concrete examples to distinguish these ring classes.
Finally, we study nil-$\ast$-reversible rings, which are defined by nilpotent elements.
Some characterizations of nil-$\ast$-reversible rings are obtained, and examples are constructed to explore their relationships with related ring classes.

Throughout the paper, all rings are assumed to be associative rings with unity unless otherwise specified.
For a ring $R$, we write $E(R)$ for the set of idempotent elements, $P(R)$ for the set of projections,
$N(R)$ for the set of nilpotent elements, and $C(R)$ for the center of $R$.

\section{$\ast$-reversible and $\ast$-symmetric rings}

We begin with the definitions of several related ring classes.
Let $R$ be a ring with unity.
A ring $R$ is said to be reversible if $ab=0$ implies $ba=0$ for all $a,b\in R$ (equivalently, $l(b)=r(b)$).
A ring equipped with an involution $\ast$ is called a $\ast$-ring. A $\ast$-ring $R$ is said to be $\ast$-reversible if $ab=0$ implies $b^{\ast}a=0$ for all $a,b\in R$.
We first establish a basic result concerning $\ast$-reversible rings.

\begin{lemma}\label{lem2.01}
If $R$ is a $\ast$-reversible ring, then $R$ is reversible.
\end{lemma}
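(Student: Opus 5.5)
Assume $ab=0$ in the $\ast$-reversible ring $R$; we must show $ba=0$. The idea is to apply the defining implication ($xy=0 \Rightarrow y^{\ast}x=0$) several times and then use the involution axioms $(xy)^{\ast}=y^{\ast}x^{\ast}$ and $x^{\ast\ast}=x$ to return to the original elements.

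First, applying $\ast$-reversibility to $ab=0$ gives $b^{\ast}a=0$. This is again a product equal to zero, with left factor $b^{\ast}$ and right factor $a$, so a second application yields $a^{\ast}b^{\ast}=0$. Applying the involution to this equation gives
\[
0=(a^{\ast}b^{\ast})^{\ast}=b^{\ast\ast}a^{\ast\ast}=ba .
\]
Hence $ab=0$ implies $ba=0$, so $R$ is reversible.

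There is no substantial obstacle. The one point to check is which factor gets starred at each step: the implication stars the right-hand factor and moves it to the left. Two applications therefore produce $a^{\ast}b^{\ast}$, whose involution is exactly $ba$.
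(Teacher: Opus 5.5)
Your proof is correct and matches the paper's argument exactly: you apply $\ast$-reversibility twice to get $b^{\ast}a=0$ and then $a^{\ast}b^{\ast}=0$, and the involution turns the latter into $ba=0$. You also make explicit the final step $(a^{\ast}b^{\ast})^{\ast}=ba$, which the paper leaves implicit.
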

\begin{proof}
Suppose $ab=0$. By the $\ast$-reversibility of $R$, we have $b^{\ast}a=0$.
Applying the $\ast$-reversible condition to $b^{\ast}a=0$ again yields $a^{\ast}b^{\ast}=0$, which implies $ba=0$.
\end{proof}

\begin{lemma}\label{lem2.02}
Let $R$ be a $\ast$-ring. For any $a \in R$, the following conditions are equivalent:

(1)  $R$ is $\ast$-reversible.

(2) $ab=0$ implies $b^{\ast}a=0$ for any $b\in R$.

(3) $ab=0$ implies $ab^{\ast}=0$ for any $b\in R$.

(4)  $l(a)=l(a^{\ast})$.

(5) $l(a) = r(a^{\ast})$.

(6)  $r(a)=r(a^{\ast})$.

(7) $r(a)= l(a^{\ast})$.
\end{lemma}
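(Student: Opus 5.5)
We read the statement as asserting equivalence of the global condition (1) with each of (2)--(7) holding for every $a\in R$, where $l(x)=\{y: yx=0\}$ and $r(x)=\{y: xy=0\}$. The plan is to show (1)$\Leftrightarrow$(2)$\Leftrightarrow$(3), then (1)$\Leftrightarrow$(5), then deduce (4), (6), (7) from (5) together with Lemma \ref{lem2.01}.

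\textbf{Step 1: (1), (2), (3).} The equivalence (1)$\Leftrightarrow$(2) is just the definition read for all $a$.

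For (2)$\Rightarrow$(3), suppose $ab=0$. By (2), $b^{\ast}a=0$. By Lemma \ref{lem2.01}, $R$ is reversible, so $ab^{\ast}=0$.

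For (3)$\Rightarrow$(2), first show that (3) forces reversibility. If $ab=0$, then (3) gives $ab^{\ast}=0$. Applying $\ast$ gives $ba^{\ast}=0$, and (3) applied to the pair $(b,a^{\ast})$ gives $ba=0$. Hence, from $ab^{\ast}=0$ and reversibility, $b^{\ast}a=0$.

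\textbf{Step 2: (1)$\Leftrightarrow$(5).} Assume (1).
\begin{itemize}
\item If $y\in l(a)$, i.e.\ $ya=0$, then $a^{\ast}y=0$, so $y\in r(a^{\ast})$.
\item If $a^{\ast}y=0$, then (1) applied to the pair $(a^{\ast},y)$ gives $y^{\ast}a^{\ast}=0$. Taking $\ast$ yields $ay=0$, and reversibility gives $ya=0$.
\end{itemize}
Conversely, assume (5) for all $a$. If $ab=0$, then $a\in l(b)=r(b^{\ast})$, so $b^{\ast}a=0$, which is (1).

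\textbf{Step 3: (4), (6), (7).} Under (1), $R$ is reversible, so $l(x)=r(x)$ for every $x$. Combining this with (5) gives
\[
l(a)=r(a^{\ast})=l(a^{\ast})=r(a),
\]
which yields (4), (6) and (7) at once.

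For the converses, applying $\ast$ gives $l(x)^{\ast}=r(x^{\ast})$, so the three conditions are closely linked:
\begin{itemize}
\item (4) for $a$ transforms into (6) for $a^{\ast}$, so (4)$\Leftrightarrow$(6) when quantified over all $a$.
\item (7) for $a^{\ast}$ reads $r(a^{\ast})=l(a)$, which is exactly (5). Hence (7)$\Rightarrow$(5)$\Rightarrow$(1).
\end{itemize}

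\textbf{Main obstacle.} It remains to show (4)$\Rightarrow$(1), i.e.\ that $l(a)=l(a^{\ast})$ for all $a$ forces $\ast$-reversibility. Applying $\ast$ to (4), equivalently using (6), we also have $r(a)=r(a^{\ast})$. So suppose $ab=0$. Then $a\in l(b)=l(b^{\ast})$, giving $ab^{\ast}=0$, which is (3). By Step 1, (3) implies (1).

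This closes the cycle, since (4) holds for all $a$ and both (4) and (6) are then available. The delicate points throughout are tracking quantification over all $a$ and the interplay $l(x)^{\ast}=r(x^{\ast})$; the rest is routine.
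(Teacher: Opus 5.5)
Your proof is correct, but it is organized differently from the paper's. The paper runs a single cycle (1)$\Leftrightarrow$(2)$\Rightarrow$(3)$\Rightarrow$(4)$\Rightarrow$(5)$\Rightarrow$(6)$\Rightarrow$(7)$\Rightarrow$(1). Each step there is an elementary element chase that invokes the preceding condition at an auxiliary element (for instance (4) applied to $x\in l(a)$ rather than to $a$). Lemma~\ref{lem2.01} is used only once, in (2)$\Rightarrow$(3).

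You instead make (1) a hub, which changes the structure in three ways:
\begin{itemize}
\item You prove (3)$\Rightarrow$(2) directly, by showing that (3) already forces reversibility.
\item You obtain (4), (6) and (7) from (1) in one stroke, by combining $l(x)=r(x)$ with (5).
\item You handle the converses through the duality $l(x)^{\ast}=r(x^{\ast})$. This shows that (4) and (6) are the same condition, and that (7) at $a$ is (5) at $a^{\ast}$.
\end{itemize}

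The paper's cycle needs fewer implications and no structural observation beyond the element chases. Your version yields a bit more: under $\ast$-reversibility the four annihilators $l(a)$, $r(a)$, $l(a^{\ast})$, $r(a^{\ast})$ all coincide. It also explains the paper's subsequent remark that (5) and (7) are equivalent. Finally, it states explicitly something the paper's proof uses but leaves implicit: conditions (2)--(7) must be read as quantified over all $a\in R$, not pointwise for a fixed $a$.
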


\begin{proof}
$(1) \Leftrightarrow (2)$. This is immediate from the definition of $\ast$-reversible rings.

$(2) \Rightarrow (3)$. The assertion follows directly from Lemma \ref{lem2.01}.

$(3) \Rightarrow (4)$.
Let $x\in l(a)$, so $xa=0$. By condition (3), we have $xa^{\ast}=0$, which implies $l(a)\subseteq l(a^{\ast})$.
Conversely, take $y\in l(a^{\ast})$, so $ya^{\ast}=0$. Again by (3), $ya=0$, so $l(a^{\ast})\subseteq l(a)$.
We therefore conclude $l(a)=l(a^{\ast})$.

$(4) \Rightarrow (5)$.
Take $x\in l(a)$, so $xa=0$. Taking involution yields $a^{\ast}x^{\ast}=0$, i.e., $a^{\ast}\in l(x^{\ast})$.
By (4), $l(x)=l(x^{\ast})$, so $a^{\ast}\in l(x)$, i.e., $a^{\ast}x=0$.
This yields $x\in r(a^{\ast})$, and thus $l(a)\subseteq r(a^{\ast})$.

Conversely, let $y\in r(a^{\ast})$, so $a^{\ast}y=0$. Then $a^{\ast}\in l(y)$.
By (4), $l(y)=l(y^{\ast})$, so $a^{\ast}y^{\ast}=0$. Taking involution gives $ya=0$, so $y\in l(a)$.
Hence $r(a^{\ast})\subseteq l(a)$, and $l(a)=r(a^{\ast})$.

$(5) \Rightarrow (6)$.
Let $x\in r(a)$, so $ax=0$. Then $a\in l(x)$.
By (5), $l(x)=r(x^{\ast})$, so $a\in r(x^{\ast})$, which means $x^{\ast}a=0$.
Taking involution yields $a^{\ast}x=0$, so $x\in r(a^{\ast})$. Thus $r(a)\subseteq r(a^{\ast})$.

Conversely, take $y\in r(a^{\ast})$, so $a^{\ast}y=0$. Then $a^{\ast}\in l(y)$.
By (5), $l(y)=r(y^{\ast})$, so $a^{\ast}\in r(y^{\ast})$, i.e., $y^{\ast}a^{\ast}=0$.
Taking involution gives $ay=0$, so $y\in r(a)$.
Hence $r(a^{\ast})\subseteq r(a)$, and $r(a)=r(a^{\ast})$.

$(6) \Rightarrow (7)$.
Let $x\in r(a)$, so $ax=0$. Taking involution gives $x^{\ast}a^{\ast}=0$, hence $a^{\ast}\in r(x^{\ast})$.
By (6), $r(x^{\ast})=r(x)$, so $a^{\ast}\in r(x)$, which implies $xa^{\ast}=0$, i.e., $x\in l(a^{\ast})$.
Thus $r(a)\subseteq l(a^{\ast})$.

Conversely, let $y\in l(a^{\ast})$, so $ya^{\ast}=0$. Then $a^{\ast}\in r(y)$.
By (6), $r(y)=r(y^{\ast})$, so $a^{\ast}\in r(y^{\ast})$, i.e., $y^{\ast}a^{\ast}=0$.
Taking involution yields $ay=0$, so $y\in r(a)$.
Hence $l(a^{\ast})\subseteq r(a)$, and therefore $r(a)=l(a^{\ast})$.

$(7) \Rightarrow (1)$.
Suppose $ab=0$. Then $a\in l(b)$.
By condition (7), $l(b)=r(b^{\ast})$, so $a\in r(b^{\ast})$, which means $b^{\ast}a=0$.
Therefore $R$ is $\ast$-reversible.
The proof is completed.
\end{proof}

We remark that conditions (5) and (7) in Lemma \ref{lem2.02} are equivalent.
In \cite{WFW1}, the authors introduced the notion of $\ast$-symmetric rings.
Recall that a ring $R$ is called $\ast$-symmetric if $abc=0$ implies $acb^{\ast}=0$ for all $a, b, c\in R$.
We will show that the class of $\ast$-symmetric rings coincides exactly with the class of $\ast$-reversible rings.
To do this, we first prove that $\ast$-symmetry is equivalent to one of the equivalent conditions listed in Lemma \ref{lem2.02}.
We begin with a known characterization of $\ast$-symmetric rings from the literature.

\begin{lemma}\label{lem2.03}\cite[Proposition 3.3]{WFW1}
A ring $R$ is $\ast$-symmetric if and only if for any $x_{1}, x_{2}, x_{3} \in R$,
$x_{1}x_{2}x_{3} = 0$, implies any of them works:
\[
x_{\sigma(1)}^{\ast}x_{\sigma(2)}x_{\sigma(3)} = 0,\quad
x_{\sigma(1)}x_{\sigma(2)}^{\ast}x_{\sigma(3)} = 0,\quad
x_{\sigma(1)}x_{\sigma(2)}x_{\sigma(3)}^{\ast} = 0
\]
within $\sigma\in S_3$, where $S_3$ denotes the symmetric group of order $3$.
\end{lemma}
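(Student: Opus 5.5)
The statement is Lemma \ref{lem2.03}, cited from \cite[Proposition 3.3]{WFW1}. I would reprove it directly from the definition of $\ast$-symmetry: $abc=0$ implies $acb^{\ast}=0$.

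The ``if'' direction is immediate. Take $\sigma=(2\,3)$ and star the last factor. Then $x_1x_2x_3=0$ gives $x_1x_3x_2^{\ast}=0$, which is exactly the defining condition. So all the work is in the ``only if'' direction.

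For ``only if'', assume $R$ is $\ast$-symmetric. I would first extract the two-element and reversibility consequences.

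First, taking $a=1$ in $abc=0\Rightarrow acb^{\ast}=0$ gives: $bc=0\Rightarrow cb^{\ast}=0$.

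Second, apply this twice. From $bc=0$ we get $cb^{\ast}=0$. Applying the same rule to the pair $(c,b^{\ast})$ gives $b^{\ast}c^{\ast}=0$, and taking the involution yields $cb=0$. So $R$ is reversible.

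Third, combining the two facts: from $bc=0$ we get $cb^{\ast}=0$, and reversibility then gives $b^{\ast}c=0$. So $R$ is $\ast$-reversible. By Lemma \ref{lem2.02}, starring either factor preserves a zero product: $xy=0\Rightarrow x^{\ast}y=0=xy^{\ast}$.

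Next I would show that any permutation of three factors preserves zero products. The defining rule with $a=1$ lets us swap adjacent factors at the end, up to a star. Also, $xyz=0$ implies $yzx=0$, by reversibility applied to $x\cdot(yz)$. Cyclic shifts together with the transposition of the last two factors generate $S_3$. The stars introduced by these swaps can then be removed as follows:
\begin{itemize}
\item A star on the last factor: regard the product as $(x_1x_2)\cdot x_3$ and use $\ast$-reversibility.
\item A star on the first factor: regard the product as $x_1\cdot(x_2x_3)$ and use $\ast$-reversibility.
\item A star on the middle factor: from $abc=0$ we get $acb^{\ast}=0$. Applying the rule again to the triple $(a,c,b^{\ast})$ gives $ab^{\ast}c^{\ast}=0$. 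Since $c^{\ast}$ is now the last factor, grouping $(ab^{\ast})c^{\ast}$ and using $\ast$-reversibility gives $ab^{\ast}c=0$.
\end{itemize}

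The result is that $x_1x_2x_3=0$ implies $x_{\sigma(1)}x_{\sigma(2)}x_{\sigma(3)}=0$ for every $\sigma\in S_3$, and each factor can then be starred individually, which yields every listed expression.

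The main obstacle is the middle-factor star. The $\ast$-reversible rule only acts on a two-fold factorization, so a middle factor cannot be starred directly. The plan above gets around this by invoking the three-term defining relation twice to move the starred element to an end. It remains to check the bookkeeping that every $\sigma\in S_3$ is reached.
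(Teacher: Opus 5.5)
The paper gives no proof of this lemma; it is quoted from \cite[Proposition 3.3]{WFW1}. Your argument is therefore a self-contained substitute rather than a variant of anything in the text, and under the reading you adopt it is correct. Your first three steps are exactly the implication (1)$\Rightarrow$(2) of Theorem~\ref{thm2.04}. The paper gets there slightly faster by putting $c=1$, which gives $ab=0\Rightarrow ab^{\ast}=0$ directly, i.e.\ condition (3) of Lemma~\ref{lem2.02}. The bookkeeping you leave open is harmless. Because the hypothesis is universally quantified, the permutations preserving zero triple products are closed under composition, and a $3$-cycle together with a transposition generates $S_3$. Your double use of the three-term rule to push a middle star to an end is valid. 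One small slip: swapping the last two of \emph{three} factors uses the defining rule with general $a$, not with $a=1$.

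The point worth flagging is the strength of the converse. You read the right-hand side conjunctively, so it contains the defining implication and the ``if'' direction is trivial. The paper, however, later uses Lemma~\ref{lem2.03} in a stronger form. In Theorem~\ref{thm2.04}, (2)$\Rightarrow$(1), it establishes only $xyz=0\Rightarrow xyz^{\ast}=0$ and concludes $\ast$-symmetry from that single implication. Your version does not cover this, because your transposition step and your middle-star step both consume the full three-term rule.

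To get the stronger statement, namely that each single listed implication already characterizes $\ast$-symmetry, add two observations.
\begin{itemize}
\item Setting a variable other than the starred one equal to $1$ turns any single listed implication into $uv=0\Rightarrow v^{\ast}u=0$ or $uv=0\Rightarrow uv^{\ast}=0$, after applying $\ast$ to the conclusion if necessary. Both are equivalent to $\ast$-reversibility by Lemma~\ref{lem2.02}.
\item $\ast$-reversibility alone already yields every permutation. From $abc=0$ you get $c^{\ast}b^{\ast}a^{\ast}=0$. Unstarring end factors of two-fold factorizations gives $c^{\ast}b^{\ast}a=0$ and then $cb^{\ast}a=0$. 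Reversibility gives $b^{\ast}ac=0$, and unstarring once more gives $bac=0$. Together with cyclic shifts this yields all of $S_3$. A star can then be put on any single factor by permuting that factor to an end, starring it, and permuting back.
\end{itemize}
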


\begin{theorem}\label{thm2.04}
Let $R$ be a $\ast$-ring. The following two statements are equivalent for all $a\in R$:

(1) $R$ is $\ast$-symmetric.

(2) $r(a)=l(a^{\ast})$.
\end{theorem}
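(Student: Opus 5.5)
We prove the two implications separately. Condition (2), holding for all $a\in R$, is condition (7) of Lemma \ref{lem2.02}, so by that lemma it is equivalent to $R$ being $\ast$-reversible. It therefore suffices to show that $\ast$-symmetry is equivalent to $\ast$-reversibility, i.e.\ that $ab=0$ implies $b^{\ast}a=0$.

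For $(1)\Rightarrow(2)$, assume $R$ is $\ast$-symmetric and let $ab=0$. We use the characterization in Lemma \ref{lem2.03} with $x_1=a$, $x_2=b$, $x_3=1$, so that $x_1x_2x_3=0$. Take the permutation $\sigma$ sending the ordered triple $(x_1,x_2,x_3)$ to $(x_2,x_1,x_3)=(b,a,1)$, and apply the involution to the first factor. This gives $b^{\ast}a\cdot 1=0$, i.e.\ $b^{\ast}a=0$.

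Alternatively, staying with the definition directly, we can argue as follows. From $1\cdot a\cdot b=0$, $\ast$-symmetry gives $1\cdot b\cdot a^{\ast}=0$, so $ba^{\ast}=0$. Applying the involution yields $ab^{\ast}=0$. This is condition (3) of Lemma \ref{lem2.02}, which already implies $\ast$-reversibility, and hence (2).

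For $(2)\Rightarrow(1)$, assume (2). By Lemma \ref{lem2.02}, $R$ is $\ast$-reversible, and by Lemma \ref{lem2.01} it is reversible. We then show symmetry in the usual way and insert the star at the end. Suppose $abc=0$.
\begin{itemize}
\item Reversibility applied to the product $(ab)\cdot c=0$ gives $cab=0$.
\item Reversibility applied to $(ca)\cdot b=0$ gives $bca=0$.
\item Reversibility applied to $b\cdot(ca)=0$ gives $cab=0$ again, so we need a different route to move $b$ past $c$.
\end{itemize}
The standard fact is that a reversible ring is semicommutative: $xy=0$ implies $xRy=0$. Indeed, $xy=0$ gives $yx=0$, hence $yxr=0$ for any $r$, hence $xry=0$ by reversibility applied to $y\cdot(xr)=0$.

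With this, from $a(bc)=0$ we obtain $bca=0$, so $bcab=0$. Since $(ca)\cdot b=0$ by reversibility from $abc=0$, semicommutativity gives $caRb=0$, in particular $cacb=0$.

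The cleaner route to $acb=0$ is the following. From $abc=0$, reversibility gives $cab=0$, so $ca\in l(b)$. Semicommutativity gives $a\,c\,b\in$ ? Rather, $bca=0$ means $b\in l(ca)$. Applying semicommutativity to $(bc)a=0$ gives $bc\,R\,a=0$, and so on. I expect this symmetric-ring step to be the main obstacle, since reversible does not imply symmetric in general.

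To handle it, the plan is to exploit $\ast$-reversibility more strongly. By Lemma \ref{lem2.02}(4)--(7) every annihilator is $\ast$-stable. From $abc=0$ we get $ab\in l(c)=l(c^{\ast})$, so $abc^{\ast}=0$. Likewise $c\in r(ab)=r((ab)^{\ast})=r(b^{\ast}a^{\ast})$. Combining these star-moves with reversibility, I would aim to show that $acb^{\ast}=0$: first obtain $a c b=0$ via the chain
\[
abc=0\Rightarrow bc\in r(a)=l(a^{\ast})\Rightarrow bca^{\ast}=0\Rightarrow \cdots,
\]
then convert $acb=0$ to $acb^{\ast}=0$ by $acb\in$ \ldots, using $ac\in l(b)=l(b^{\ast})$.

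If that chain resists, the fallback is to verify directly one of the equivalent conditions of Lemma \ref{lem2.03}, choosing a permutation whose required conclusion follows from $\ast$-reversibility together with semicommutativity alone.
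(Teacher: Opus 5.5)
Your $(1)\Rightarrow(2)$ is correct and is essentially the paper's argument. The paper feeds $ab\cdot 1=0$ into the definition to get $ab^{\ast}=0$; your $1\cdot a\cdot b=0$ variant works just as well.

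The genuine gap is $(2)\Rightarrow(1)$: you never derive $acb^{\ast}=0$ from $abc=0$. The displayed chain ends in ``$\cdots$'', the sentence with ``$\in$ ?'' is abandoned, and the fallback names no permutation and checks nothing. The moves you actually carry out cannot close it. Reversibility, semicommutativity, and rewriting $bc\in r(a)=l(a^{\ast})$ all preserve the cyclic order of $a,b,c$. Starring a single letter via $l(x)=l(x^{\ast})$ or $r(x)=r(x^{\ast})$ does not move it. So these steps only reach rotations of $abc$ (with stars), never the transposed order $acb$. As you note yourself, reversibility alone does not give symmetry. What is missing is an order-reversing step, which is exactly what the involution of a \emph{product} supplies.

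You wrote that step down without using it. The identity $c\in r(ab)=r((ab)^{\ast})=r(b^{\ast}a^{\ast})$ gives $b^{\ast}a^{\ast}c=0$, which swaps $a$ and $b$. Two more moves finish:
\[
abc=0\ \Rightarrow\ b^{\ast}a^{\ast}c=0\ \Rightarrow\ a^{\ast}cb^{\ast}=0\ \Rightarrow\ acb^{\ast}=0.
\]
The middle step is reversibility (Lemma \ref{lem2.01}) applied to $b^{\ast}\cdot(a^{\ast}c)=0$. The last step uses $cb^{\ast}\in r(a^{\ast})=r(a)$, which is condition (6) of Lemma \ref{lem2.02}.

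The paper instead argues from $z\in r(xy)\subseteq l((xy)^{\ast})$ to get $xyz=0\Rightarrow xyz^{\ast}=0$. You also obtained this, via $ab\in l(c)=l(c^{\ast})$. The paper then observes that this implication is one of the equivalent conditions in Lemma \ref{lem2.03} (identity permutation, star on the last factor), so $\ast$-symmetry follows at once. Your ``fallback'' was therefore already in hand. The explicit chain above has the advantage of not depending on how that cited characterization is read.
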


\begin{proof}
$(1) \Rightarrow (2)$.
Suppose $ab = 0$. Rewrite it as $ab\cdot 1 = 0$. Since $R$ is $\ast$-symmetric, we have $ab^{\ast} = 0$, which yields $ba^{\ast} = 0$.
Consequently, $r(a)\subseteq l(a^{\ast})$.

$(2) \Rightarrow (1)$.
Let $xyz= 0$. Then $z\in r(xy)$.
By condition (2), $r(xy) \subseteq l((xy)^{\ast})$, so $z(xy)^{\ast} = 0$,
which yields $xyz^{\ast}=0$.
By Lemma \ref{lem2.03}, $R$ is $\ast$-symmetric.
\end{proof}

\begin{remark}
In Lemma \ref{lem2.02} and Theorem \ref{thm2.04}, every equality relation between annihilators can be weakened to an inclusion relation.
\end{remark}

We now construct an explicit example to demonstrate that a reversible ring need not be $\ast$-reversible.

\begin{lemma}\label{lem2.06}
Let $R$ be a $\ast$-ring. Define an involution $\ast$ on the ring $V_2(R)=\left\{\begin{pmatrix}a & b \\ 0 & a\end{pmatrix}\mid a,b\in R\right\}$ by
\[
\begin{pmatrix}
a & b \\
0 & a
\end{pmatrix}^{\ast}
=
\begin{pmatrix}
a^{\ast} & -b^{\ast} \\
0 & a^{\ast}
\end{pmatrix}.
\]
Then $V_2(R)$ forms a $\ast$-ring.
\end{lemma}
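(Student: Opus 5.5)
We verify directly that the map $\ast$ defined on $V_2(R)$ satisfies the axioms of an involution: it is additive, it reverses products, and it squares to the identity. We also check that it sends $V_2(R)$ into itself, which is immediate from the form of the image matrix. Recall that $V_2(R)$ is a subring of the $2\times 2$ upper triangular matrices over $R$, since
\[
\begin{pmatrix} a & b\\ 0 & a\end{pmatrix}\begin{pmatrix} c & d\\ 0 & c\end{pmatrix}=\begin{pmatrix} ac & ad+bc\\ 0 & ac\end{pmatrix}.
\]
So the only issue is the involution structure.

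Additivity holds because $\ast$ on $R$ is additive and negation is additive. The identity $(A^{\ast})^{\ast}=A$ follows from $(a^{\ast})^{\ast}=a$ and $(-(-b^{\ast})^{\ast})=b$, using that $(-x)^{\ast}=-x^{\ast}$ in $R$.

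The main step is anti-multiplicativity. For $A=\begin{pmatrix} a & b\\ 0 & a\end{pmatrix}$ and $B=\begin{pmatrix} c & d\\ 0 & c\end{pmatrix}$, we have
\[
(AB)^{\ast}=\begin{pmatrix} (ac)^{\ast} & -(ad+bc)^{\ast}\\ 0 & (ac)^{\ast}\end{pmatrix}=\begin{pmatrix} c^{\ast}a^{\ast} & -d^{\ast}a^{\ast}-c^{\ast}b^{\ast}\\ 0 & c^{\ast}a^{\ast}\end{pmatrix},
\]
while
\[
B^{\ast}A^{\ast}=\begin{pmatrix} c^{\ast} & -d^{\ast}\\ 0 & c^{\ast}\end{pmatrix}\begin{pmatrix} a^{\ast} & -b^{\ast}\\ 0 & a^{\ast}\end{pmatrix}=\begin{pmatrix} c^{\ast}a^{\ast} & -c^{\ast}b^{\ast}-d^{\ast}a^{\ast}\\ 0 & c^{\ast}a^{\ast}\end{pmatrix}.
\]
These agree entrywise, which is the only point where any care is needed. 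In particular, the minus sign is harmless here: it is needed later, not for this lemma. We also note that the identity matrix is fixed by $\ast$, so $V_2(R)$ is a unital $\ast$-ring. This completes the verification.
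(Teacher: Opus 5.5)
Your proposal is correct and takes the same route as the paper, whose proof simply says the result follows by directly checking the involution axioms. You carry out exactly that check: additivity, $(A^{\ast})^{\ast}=A$, and $(AB)^{\ast}=B^{\ast}A^{\ast}$, with the entrywise computation done correctly.
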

\begin{proof}
The result follows by directly verifying the axioms of an involution.
\end{proof}

\begin{example}\label{exa2.07}
Let $R$ be a reduced ring which is not $\ast$-reversible. Such an example is constructed in \cite[Example 2.3]{WFW1}.
By Lemma \ref{lem2.02}, there exist elements $a,b\in R$ satisfying $ab=0$, $b^{\ast}a\neq 0$ and $ab^{\ast}\neq 0$.

Let $V_2(R)$ be the ring defined in Lemma \ref{lem2.06}. According to \cite[Lemma 2.1]{WFW1}, $V_2(R)$ is symmetric. Since every symmetric ring is reversible, $V_2(R)$ is reversible.
Now take $A,B\in V_2(R)$ of the form
\[
A=\begin{pmatrix}
0 & a \\
0 & 0
\end{pmatrix},
\qquad
B=\begin{pmatrix}
b & c \\
0 & b
\end{pmatrix}
\]
for some $c\in R$. It is straightforward to verify that $AB=0$.

Compute the product $B^{\ast}A$:
\[
B^{\ast}A
=
\begin{pmatrix}
b^{\ast} & -c^{\ast} \\
0 & b^{\ast}
\end{pmatrix}
\begin{pmatrix}
0 & a \\
0 & 0
\end{pmatrix}
=
\begin{pmatrix}
0 & b^{\ast}a \\
0 & 0
\end{pmatrix}
\neq O_2.
\]
This shows that $V_2(R)$ fails to be $\ast$-reversible.
\end{example}

\section{$\ast$-reversible and pro-$\ast$-reversible rings}

It is well known that a ring $R$ is reversible if and only if $ab\in E(R)$ implies $ba\in E(R)$ for all $a,b\in R$.
We verify this equivalence as follows. First, suppose $R$ is reversible and $ab\in E(R)$.
Then $ab(1-ab)=0$, which yields $b(1-ab)a=0$ by the reversibility of $R$.
It follows that $ba\in E(R)$.
Conversely, let $ab=0$. Clearly $ab\in E(R)$, so the assumption gives $ba\in E(R)$.
Consequently, $ba=(ba)^2=baba=0$, which confirms $R$ is reversible.
Throughout this section, we derive several equivalent characterizations for generalized $\ast$-reversible rings.
Let $R$ be a $\ast$-ring. An element $p\in R$ is called a projection if $p=p^{2}=p^{\ast}$.
We write $P(R)$ for the set of all projections in $R$.
A ring $R$ is said to be abelian if $E(R) \subseteq C(R)$.

\begin{lemma}\label{lem3.01}
Let $R$ be a $\ast$-ring, and let $a,b\in R$. The following statements are equivalent:

(1) $R$ is reversible.

(2) $ab\in E(R)$ implies $ab=ba$.

(3) $ab\in P(R)$ implies $ab=ba$.

(4) $ab\in P(R)$ implies $ba\in P(R)$.
\end{lemma}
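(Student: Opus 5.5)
The plan is to prove the cycle $(1)\Rightarrow(2)\Rightarrow(3)\Rightarrow(4)\Rightarrow(1)$. Three of the four steps are short; the real work is $(1)\Rightarrow(2)$, which needs the fact that reversible rings are abelian.

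\emph{Preliminary: reversible rings are abelian.} Let $R$ be reversible. First, $R$ is semicommutative. Indeed, if $xy=0$ then $yx=0$, so $yxr=0$ for every $r\in R$. Applying reversibility to the product $y\cdot(xr)=0$ gives $xry=0$. Now let $e\in E(R)$. From $e(1-e)=0$ we get $er(1-e)=0$ for all $r$. From $(1-e)e=0$ we get $(1-e)re=0$ for all $r$. Hence $er=ere=re$, so $e\in C(R)$ and $R$ is abelian.

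\emph{$(1)\Rightarrow(2)$, the main step.} Suppose $e=ab\in E(R)$. By the preliminary, $e$ is central. As verified just before the lemma, $f=ba$ is also idempotent, hence also central. The key trick is to produce annihilating products that absorb the central idempotent.
\begin{itemize}
\item Since $abe=e^2=e$, we have $a(b-be)=ab-abe=0$. Reversibility gives $(b-be)a=0$, that is, $ba=bea$. Centrality of $e$ turns this into $f=ef$.
\item Exchanging the roles of $a$ and $b$ (so using $f$ in place of $e$), the same argument gives $e=fe$.
\end{itemize}
Since central idempotents commute, $e=fe=ef=f$, that is, $ab=ba$. I expect this to be the main obstacle: one has to notice that $e=ab$ itself must be shown central first, and that the two one-sided absorptions $f=ef$ and $e=fe$ combine to give equality.

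\emph{$(2)\Rightarrow(3)$.} This is trivial, because $P(R)\subseteq E(R)$.

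\emph{$(3)\Rightarrow(4)$.} If $ab\in P(R)$, then (3) gives $ba=ab\in P(R)$.

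\emph{$(4)\Rightarrow(1)$.} Suppose $ab=0$. Since $0\in P(R)$, condition (4) gives $ba\in P(R)$. Then
\[
ba=(ba)^2=b(ab)a=0,
\]
so $R$ is reversible. This closes the cycle.
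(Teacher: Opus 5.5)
Your proof is correct and follows the same cycle as the paper, with $(1)\Rightarrow(2)$ resting on the same fact: reversible rings are abelian, so $ab$ and $ba$ are central idempotents. The differences are minor: you prove abelianness (the paper only cites it), and you finish with the absorptions $f=ef$ and $e=fe$, using reversibility a second time. The paper instead concludes directly from centrality via $ba=b(ab)a=(ab)(ba)=a(ba)b=(ab)^2=ab$.
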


\begin{proof}
$(1)\Rightarrow(2)$.
Assume $ab\in E(R)$. As shown above, $ba\in E(R)$.
Since every reversible ring is abelian, all idempotents are central in $R$.
Thus $ab,ba\in C(R)$, and we obtain
\[
ba = b(ab)a = ab(ba) = (ab)^2 = ab.
\]

$(2)\Rightarrow(3)$.
Clearly, $ab\in P(R)$ implies $ab\in E(R)$.
By (2), we have $ab=ba$.

$(3)\Rightarrow(4)$.
If $ab\in P(R)$, then $ab=ba$ by condition (3).
It follows immediately that $ba\in P(R)$.

$(4)\Rightarrow(1)$.
If $ab=0$, then $ab\in P(R)$.
By condition (4), $ba\in P(R)$, so $ba=(ba)^2=baba=0$,
which proves that $R$ is reversible.
\end{proof}

Motivated by Lemma \ref{lem3.01}, we introduce the notion of pro-$\ast$-reversible rings as follows.

\begin{definition}
Let $R$ be a $\ast$-ring.
We say $R$ is pro-$\ast$-reversible if $ab\in P(R)$ implies $b^{\ast}a\in P(R)$ for all $a,b\in R$.
\end{definition}

\begin{lemma}\label{lem3.03}
Every pro-$\ast$-reversible ring is reversible.
\end{lemma}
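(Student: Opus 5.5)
We prove that a pro-$\ast$-reversible ring is reversible by imitating the proof of Lemma \ref{lem2.01}: apply the defining condition twice, so that the two involutions cancel. The tool for the conclusion is Lemma \ref{lem3.01}, part (4)$\Rightarrow$(1).

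\emph{Step 1 (one application).} Suppose $ab\in P(R)$. By pro-$\ast$-reversibility applied to the pair $(a,b)$, we obtain $b^{\ast}a\in P(R)$.

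\emph{Step 2 (second application).} Apply the definition again to the pair $(x,y)=(b^{\ast},a)$. Since $xy=b^{\ast}a\in P(R)$, we get
\[
y^{\ast}x=a^{\ast}b^{\ast}\in P(R).
\]

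\emph{Step 3 (undo the involution).} The set $P(R)$ is closed under $\ast$, because projections are self-adjoint. Hence
\[
(a^{\ast}b^{\ast})^{\ast}=ba\in P(R).
\]

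\emph{Step 4 (conclude).} We have shown that $ab\in P(R)$ implies $ba\in P(R)$. This is condition (4) of Lemma \ref{lem3.01}, so $R$ is reversible.

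For a direct check of the case that matters, take $ab=0$. Then $ab\in P(R)$, so $ba\in P(R)$ by the above, and therefore
\[
ba=(ba)^{2}=b(ab)a=0.
\]

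No step is a genuine obstacle. The only points needing care are choosing the right pair in Step 2 and noticing that $P(R)^{\ast}=P(R)$.
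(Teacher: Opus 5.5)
Your proposal is correct and follows essentially the same route as the paper: apply the pro-$\ast$-reversible condition twice to get $a^{\ast}b^{\ast}\in P(R)$, use that projections are self-adjoint to obtain $ba\in P(R)$, and conclude by Lemma~\ref{lem3.01}, (4)$\Rightarrow$(1). Your direct check for $ab=0$ just unwinds that implication, and your explicit remark that $P(R)^{\ast}=P(R)$ spells out a step the paper leaves implicit.
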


\begin{proof}
Let $a,b\in R$ satisfy $ab\in P(R)$. Since $R$ is pro-$\ast$-reversible, we have $b^{\ast}a\in P(R)$.
Applying the pro-$\ast$-reversible condition again,
we further obtain $a^{\ast}b^{\ast}\in P(R)$, which implies $ba\in P(R)$.
By Lemma \ref{lem3.01}, $R$ is reversible.
\end{proof}

Recall that for any $\ast$-reversible ring, $ab=0$ implies $ab^{\ast}=b^{\ast}a=0$.
Therefore, Lemma \ref{lem3.03} immediately yields the following corollary.

\begin{corollary}\label{cor3.04}
Let $R$ be a $\ast$-ring. The following conditions are equivalent:

(1) $R$ is pro-$\ast$-reversible.

(2) $ab\in P(R)$ implies $ab^{\ast}=b^{\ast}a\in P(R)$ for all $a,b\in R$.
\end{corollary}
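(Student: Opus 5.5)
The implication (2)$\Rightarrow$(1) is immediate: if $ab\in P(R)$, then (2) gives $b^{\ast}a\in P(R)$, which is exactly the defining condition of pro-$\ast$-reversibility. So all the work lies in (1)$\Rightarrow$(2). There we must show that $ab\in P(R)$ forces $ab^{\ast}=b^{\ast}a$, since $b^{\ast}a\in P(R)$ is already given by (1).

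Assume $R$ is pro-$\ast$-reversible and $ab=p\in P(R)$. By definition $q:=b^{\ast}a\in P(R)$. By Lemma \ref{lem3.03}, $R$ is reversible, so by Lemma \ref{lem3.01} it is abelian and $ab=ba$; in particular $p$ and $q$ are central idempotents. Next apply pro-$\ast$-reversibility to the pair $(b^{\ast},a)$, whose product is $q\in P(R)$. This gives $a^{\ast}b^{\ast}\in P(R)$, i.e. $(ba)^{\ast}=p$, which is consistent with what we have but adds nothing new. To compare $ab^{\ast}$ with $b^{\ast}a$, we use that $q=b^{\ast}a$ is a projection lying in $E(R)$. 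Lemma \ref{lem3.01}(2) applied to the factorization $b^{\ast}\cdot a\in E(R)$ then yields $b^{\ast}a=ab^{\ast}$. Hence $ab^{\ast}=b^{\ast}a\in P(R)$.

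So the argument is: first, (1) gives $b^{\ast}a\in P(R)$; second, Lemma \ref{lem3.03} gives reversibility; third, Lemma \ref{lem3.01}(2), applied to the idempotent product $b^{\ast}a$, gives commutation of $b^{\ast}$ and $a$. The only point needing care is the third step. Lemma \ref{lem3.01}(2) is stated for arbitrary elements whose product is idempotent, so it applies to the pair $b^{\ast},a$ and not just to $a,b$. I expect no real obstacle beyond checking this substitution.
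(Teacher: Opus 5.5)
Your proof is correct and matches the paper's argument: (1) gives $b^{\ast}a\in P(R)$, Lemma \ref{lem3.03} gives reversibility, and Lemma \ref{lem3.01}(2), applied to the idempotent product $b^{\ast}\cdot a$, yields $ab^{\ast}=b^{\ast}a$. You make explicit the substitution $(b^{\ast},a)$ that the paper leaves implicit, while your side computation $(ba)^{\ast}=p$ is unnecessary but harmless.
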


\begin{proof}
It suffices to prove $(1)\Rightarrow(2)$.
Suppose $ab\in P(R)$. By (1), $b^{\ast}a\in P(R)$.
Combining Lemma \ref{lem3.01} and Lemma \ref{lem3.03}, we obtain $ab^{\ast}=b^{\ast}a$,
so that $ab^{\ast}=b^{\ast}a\in P(R)$.
\end{proof}

\begin{proposition}\label{prop3.05}
Every pro-$\ast$-reversible ring is $\ast$-reversible.
\end{proposition}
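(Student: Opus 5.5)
The plan is to apply the pro-$\ast$-reversible condition to the projection $0$ and then show that the resulting projection must vanish. Suppose $ab=0$. Since $0\in P(R)$, pro-$\ast$-reversibility gives $p:=b^{\ast}a\in P(R)$. By Corollary \ref{cor3.04} we also have $p=ab^{\ast}$. Because $p$ is a projection, $p=p^{\ast}=(b^{\ast}a)^{\ast}=a^{\ast}b$. So $p$ has three descriptions:
\[
p=b^{\ast}a=ab^{\ast}=a^{\ast}b .
\]
The goal is to show $p=p^{2}=0$ by choosing a factorization of $p^{2}$ that has $a$ on the far left and $b$ on the far right.

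Next I will use that $R$ is reversible, by Lemma \ref{lem3.03}, and derive the standard consequence that reversible rings are semicommutative. If $ab=0$, then reversibility gives $ba=0$. Hence $bar=0$ for every $r\in R$, and applying reversibility to the product $b\cdot(ar)=0$ gives $arb=0$. Thus $aRb=0$.

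Finally, I will write
\[
p=p^{2}=(ab^{\ast})(a^{\ast}b)=a\,(b^{\ast}a^{\ast})\,b\in aRb=0 .
\]
This gives $b^{\ast}a=p=0$, so $R$ is $\ast$-reversible.

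The main obstacle is finding the right factorization of $p^{2}$. The products $(b^{\ast}a)(b^{\ast}a)$ and $(ab^{\ast})(b^{\ast}a)$ do not visibly vanish. The key observation is that self-adjointness of $p$ supplies the extra expression $p=a^{\ast}b$, which ends in $b$. Combining it with $p=ab^{\ast}$, which begins with $a$, lets semicommutativity finish the argument.
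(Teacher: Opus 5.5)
Your proof is correct and is essentially the paper's argument: the paper also invokes Corollary~\ref{cor3.04} to get $b^{\ast}a=ab^{\ast}$, and then shows that the projection $b^{\ast}a$ vanishes because $(b^{\ast}a)(b^{\ast}a)^{\ast}=ab^{\ast}a^{\ast}b=0$, which is exactly your product $(ab^{\ast})(a^{\ast}b)$. Your semicommutativity step is harmless but unnecessary, since the middle factor $b^{\ast}a^{\ast}=(ab)^{\ast}$ is already $0$.
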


\begin{proof}
Suppose $ab=0$. Then clearly $ab\in P(R)$.
By Corollary \ref{cor3.04}, $ab^{\ast}=b^{\ast}a$.
We compute
\[
b^{\ast}a a^{\ast} = ab^{\ast}a^{\ast} = a(ab)^{\ast}=0.
\]
This yields $(b^{\ast}a)(b^{\ast}a)^{\ast}=0$.
Since $b^{\ast}a\in P(R)$, we conclude $b^{\ast}a=0$.
This shows that $R$ is $\ast$-reversible.
\end{proof}

Motivated by Corollary \ref{cor3.04} and Proposition \ref{prop3.05}, we pose the following two conjectures.

\begin{question}\label{Q3.06}
Let $R$ be a $\ast$-ring, and let $a,b\in R$. Are the following conditions equivalent?

(1) $R$ is pro-$\ast$-reversible.

(2) $ab\in P(R)$ implies $ab^{\ast}\in P(R)$.

(3) $ab\in P(R)$ implies $ab^{\ast}=b^{\ast}a$.
\end{question}

\begin{question}\label{Q3.07}
Is every $\ast$-reversible ring pro-$\ast$-reversible?
\end{question}

Regarding Question \ref{Q3.06}, we verify that the implications $(1)\Rightarrow(2)$ and $(2)\Rightarrow(3)$ hold.
Indeed, $(1) \Rightarrow (2)$ is immediate from Corollary \ref{cor3.04}.
As for $(2) \Rightarrow (3)$, we present the following reasoning.

\begin{lemma}\label{lem3.08}
Let $R$ be a $\ast$-ring satisfying that $ab\in P(R)$ implies
$ab^{\ast}\in P(R)$ for all $a,b\in R$. Then $R$ is abelian.
\end{lemma}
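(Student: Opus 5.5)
The plan is to apply the hypothesis only in the degenerate case where the projection is $0$. This turns it into the annihilator condition of Lemma \ref{lem2.02}(3), and from there we pass through reversibility to abelianness.

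\textbf{Step 1 (reduction to $\ast$-reversibility).} Let $a,b\in R$ with $ab=0$. Since $0=0^2=0^{\ast}$, we have $ab\in P(R)$, so the hypothesis gives $ab^{\ast}\in P(R)$. This alone does not yet show $ab^{\ast}=0$, which is the key point to handle. Since $ab=0$, we compute
\[
ab^{\ast}=(ab^{\ast})^{\ast}=ba^{\ast},
\]
and therefore
\[
ab^{\ast}=(ab^{\ast})(ab^{\ast})=(ab^{\ast})(ab^{\ast})^{\ast}=ab^{\ast}ba^{\ast}.
\]
That does not vanish directly, so instead use the other order:
\[
ab^{\ast}=(ab^{\ast})^{\ast}(ab^{\ast})=ba^{\ast}ab^{\ast}.
\]
Neither expression obviously kills. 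A cleaner route is to apply the hypothesis to the pair $(b^{\ast},a^{\ast})$, whose product $b^{\ast}a^{\ast}=(ab)^{\ast}=0$ is also a projection. This yields $b^{\ast}a\in P(R)$. We now have two projections $p=ab^{\ast}$ and $q=b^{\ast}a$. Their products satisfy
\[
pq=ab^{\ast}b^{\ast}a,\qquad qp=b^{\ast}a\,ab^{\ast}.
\]
Now $q^2=b^{\ast}ab^{\ast}a=b^{\ast}pa$ and $p^2=ab^{\ast}ab^{\ast}=aqb^{\ast}$. Also $p=p^{\ast}=ba^{\ast}$ gives
\[
p=p^{3}=ab^{\ast}\,ba^{\ast}\,ab^{\ast},
\]
and the middle factor regroups as $a(b^{\ast}b)(a^{\ast}a)b^{\ast}$.

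I expect this verification, that $ab=0$ forces $ab^{\ast}=0$, to be the main obstacle. If the direct manipulation resists, I would try applying the hypothesis to the pair $(a,\,b^{\ast}x)$ for a suitable $x$. Alternatively, I would use $p=ba^{\ast}$ and $p=ab^{\ast}$ together: then $p\cdot p=(ab^{\ast})(ba^{\ast})$, and I would try to route a factor $ab$ into this product to make it $0$.

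\textbf{Step 2.} Once $ab=0\Rightarrow ab^{\ast}=0$ is established, Lemma \ref{lem2.02} $(3)\Rightarrow(1)$ shows $R$ is $\ast$-reversible. Lemma \ref{lem2.01} then shows $R$ is reversible.

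\textbf{Step 3 (reversible implies abelian).} Let $e\in E(R)$ and $r\in R$. From $e(1-e)=0$, reversibility gives $(1-e)e=0$, hence $(1-e)e\,r=0$. Applying reversibility to the factorization $(1-e)\cdot(er)=0$ gives $er(1-e)=0$, i.e.\ $er=ere$. Symmetrically, from $(1-e)e=0$ we get $e(1-e)r=0$, hence $(1-e)re=0$, i.e.\ $re=ere$. Thus $er=re$, and every idempotent is central, so $R$ is abelian.
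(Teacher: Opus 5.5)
There is a genuine gap: Step~1 is never proved. Every identity you display there is correct, including $ab^{\ast}=ba^{\ast}$, $ab^{\ast}=ab^{\ast}ba^{\ast}=ba^{\ast}ab^{\ast}$, $b^{\ast}a\in P(R)$, and $p=ab^{\ast}ba^{\ast}ab^{\ast}$. But none of them forces $ab^{\ast}=0$, and you close by listing things you would try. Steps~2 and~3 depend entirely on Step~1, so the lemma is not established.

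Your ordering is also the reverse of the paper's. The implication $ab=0\Rightarrow ab^{\ast}=0$ is the content of Theorem~\ref{thm3.09}, and the paper proves it \emph{using} the abelianness from this lemma. Once the projection $ab^{\ast}$ is central, $a^{\ast}ab^{\ast}=ab^{\ast}a^{\ast}=a(ab)^{\ast}=0$, hence $ab^{\ast}=(ab^{\ast})^{\ast}(ab^{\ast})=ba^{\ast}ab^{\ast}=0$. By trying to reach $\ast$-reversibility first, you need an independent argument for exactly the step that becomes easy only after abelianness is known.

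The missing idea is to use the degenerate case of the hypothesis on an idempotent, not on a general zero product. For $e\in E(R)$ we have $(1-e)e=0\in P(R)$, so $(1-e)e^{\ast}\in P(R)$. Self-adjointness compares $(1-e)e^{\ast}=e^{\ast}-ee^{\ast}$ with its adjoint $e(1-e^{\ast})=e-ee^{\ast}$, which gives $e=e^{\ast}$. Thus $E(R)\subseteq P(R)$.

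The paper then quotes \cite[Proposition 3.4]{WFW1}, but a direct argument is short. For $r\in R$, the element $f=e+er(1-e)$ is idempotent, hence self-adjoint, which reads $er(1-e)=(1-e)r^{\ast}e$. Multiplying on the left by $e$ gives $er(1-e)=0$. Replacing $e$ by $1-e$ gives $(1-e)re=0$, so $er=ere=re$. Your Step~3 (reversible $\Rightarrow$ abelian) is correct, but with this route it is no longer needed.
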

\begin{proof}
Let $e\in E(R)$. Then $(1-e)e=0\in P(R)$.
By the given hypothesis, $(1-e)e^{\ast}\in P(R)$.
Hence $[(1-e)e^{\ast}]^{\ast}=(1-e)e^{\ast}$, which yields $e=e^{\ast}$.
Therefore $E(R)\subseteq P(R)$.
It then follows from \cite[Proposition 3.4]{WFW1} that $R$ is abelian.
\end{proof}

\begin{theorem}\label{thm3.09}
Let $R$ be a $\ast$-ring such that $ab\in P(R)$ implies $ab^{\ast}\in P(R)$ for all $a,b\in R$.
Then $R$ is $\ast$-reversible.
\end{theorem}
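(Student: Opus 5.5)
The plan is to verify condition (3) of Lemma \ref{lem2.02}: we show that $ab=0$ implies $ab^{\ast}=0$. Since (3) is equivalent to (1) in Lemma \ref{lem2.02}, this gives that $R$ is $\ast$-reversible.

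\emph{Step 1.} Let $a,b\in R$ with $ab=0$. Since $0\in P(R)$, the hypothesis of Theorem \ref{thm3.09} gives that
\[
e:=ab^{\ast}
\]
is a projection. So $e=e^{2}$, and $e=e^{\ast}=ba^{\ast}$.

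\emph{Step 2.} By Lemma \ref{lem3.08}, the same hypothesis makes $R$ abelian. Hence the idempotent $e$ is central, and in particular $ea=ae$.

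\emph{Step 3.} We show $ea=0$. Using the second description $e=ba^{\ast}$ from Step 1,
\[
ae=a(ba^{\ast})=(ab)a^{\ast}=0 .
\]
By Step 2 this gives $ea=ae=0$.

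\emph{Step 4.} Using the first description $e=ab^{\ast}$ for one factor of $e^{2}$,
\[
e=e^{2}=e(ab^{\ast})=(ea)b^{\ast}=0 .
\]
Therefore $ab^{\ast}=0$, which is condition (3) of Lemma \ref{lem2.02}, and $R$ is $\ast$-reversible.

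The main obstacle is Step 3, where we must use both $e=ab^{\ast}$ and $e=e^{\ast}=ba^{\ast}$ together. Centrality of $e$ lets $a$ act from the side where $ab=0$ applies. Without the abelian property from Lemma \ref{lem3.08}, the product $ea=ab^{\ast}a$ cannot be reduced to zero.
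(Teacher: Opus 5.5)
Your proof is correct and follows essentially the same route as the paper. Both arguments invoke Lemma \ref{lem3.08} to make the projection $ab^{\ast}$ central and then annihilate it using $ab=0$. The paper writes $ab^{\ast}=(ab^{\ast})^{\ast}(ab^{\ast})=ba^{\ast}ab^{\ast}$ and commutes $a^{\ast}$ past $ab^{\ast}$; you instead commute $a$ past $e=e^{\ast}=ba^{\ast}$ and use $e=e^{2}$, which is just the mirror-image computation.
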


\begin{proof}
Suppose $ab=0$. Then the hypothesis implies $ab^{\ast}\in P(R)$.
By Lemma \ref{lem3.08}, $R$ is abelian, so $ab^{\ast}\in C(R)$.
We compute:
\[
a^{\ast}ab^{\ast}=ab^{\ast}a^{\ast}=a(ab)^{\ast}=0.
\]
Thus,
\[
(ab^{\ast})^{\ast}(ab^{\ast})=ba^{\ast}ab^{\ast}=0.
\]
As $ab^{\ast}$ is a projection, we obtain
\[
ab^{\ast}=(ab^{\ast})^{2}=(ab^{\ast})^{\ast}(ab^{\ast})=0.
\]
This shows that $R$ is $\ast$-reversible, as required.
\end{proof}

\begin{corollary}\label{cor3.10}
Let $R$ be a $\ast$-ring. If $ab\in P(R)$ implies $ab^{\ast}\in P(R)$ for all $a,b\in R$, then $ab^{\ast}=b^{\ast}a$.
\end{corollary}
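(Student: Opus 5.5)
The statement should be read as: under the standing hypothesis, whenever $ab\in P(R)$ we have $ab^{\ast}=b^{\ast}a$. The plan is to reduce it to Lemma \ref{lem3.01}, applied to the pair $(a,b^{\ast})$ rather than $(a,b)$. Reversibility is what that lemma needs, and it comes from Theorem \ref{thm3.09}.

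First, by Theorem \ref{thm3.09} the hypothesis makes $R$ $\ast$-reversible. By Lemma \ref{lem2.01}, $R$ is then reversible.

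Next, fix $a,b\in R$ with $ab\in P(R)$. The hypothesis gives $ab^{\ast}\in P(R)$. Now apply condition (3) of Lemma \ref{lem3.01}, which holds because $R$ is reversible, to the elements $a$ and $b^{\ast}$. Since their product $ab^{\ast}$ is a projection, the lemma yields $ab^{\ast}=b^{\ast}a$. This is exactly the claim.

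As a by-product, $b^{\ast}a=ab^{\ast}\in P(R)$, so the hypothesis of Question \ref{Q3.06}(2) in fact forces condition (1) there.

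The only point needing care is choosing to feed the pair $(a,b^{\ast})$, not $(a,b)$, into Lemma \ref{lem3.01}. That step rests entirely on knowing $R$ is reversible, which is why Theorem \ref{thm3.09} must be invoked first. Beyond this I expect no real obstacle; each step is a direct citation of an earlier result.
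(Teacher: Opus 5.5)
Your proposal is correct and follows the paper's proof: Theorem~\ref{thm3.09} gives $\ast$-reversibility, hence reversibility, and Lemma~\ref{lem3.01} applied to the pair $(a,b^{\ast})$ then gives $ab^{\ast}=b^{\ast}a$, since $ab^{\ast}\in P(R)$. Your reading of the statement as conditional on $ab\in P(R)$ is also the one the paper's proof uses.
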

\begin{proof}
By Theorem \ref{thm3.09}, $R$ is $\ast$-reversible under the present hypotheses.
In particular, $R$ is reversible.
As $ab^{\ast}\in P(R)$, Lemma \ref{lem3.01} forces $ab^{\ast}=b^{\ast}a$.
\end{proof}

The following corollary establishes the equivalence between conditions (1) and (2) listed in Question \ref{Q3.06}.

\begin{corollary}\label{cor3.11}
Let $R$ be a $\ast$-ring, and let $a,b\in R$. The following statements are equivalent:

(1) $R$ is pro-$\ast$-reversible.

(2) $ab\in P(R)$ implies $ab^{\ast}\in P(R)$.
\end{corollary}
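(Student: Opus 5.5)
(1)$\Rightarrow$(2) is already contained in Corollary \ref{cor3.04}: if $ab\in P(R)$, pro-$\ast$-reversibility gives $ab^{\ast}=b^{\ast}a\in P(R)$, and in particular $ab^{\ast}\in P(R)$. So the real work is (2)$\Rightarrow$(1), and I would deduce it from the results just proved.

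Assume (2) and let $ab\in P(R)$. By Corollary \ref{cor3.10}, $ab^{\ast}=b^{\ast}a$. By hypothesis (2), $ab^{\ast}\in P(R)$. Combining these, $b^{\ast}a=ab^{\ast}\in P(R)$, which is exactly the pro-$\ast$-reversibility condition. Hence $R$ is pro-$\ast$-reversible.

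For this to be valid, Corollary \ref{cor3.10} must apply to the same pair $a,b$ with $ab\in P(R)$. Its proof goes through Theorem \ref{thm3.09} (so $R$ is $\ast$-reversible, hence reversible by Lemma \ref{lem2.01}) and then Lemma \ref{lem3.01}(3) applied to the product $a\cdot b^{\ast}$, which is a projection by (2). This yields $ab^{\ast}=b^{\ast}a$, as needed. The step I expect to be the main obstacle is therefore already handled by Theorem \ref{thm3.09}; the only remaining point to check is that Lemma \ref{lem3.01} is applied to the pair $(a,b^{\ast})$ rather than $(a,b)$, and this presents no difficulty.
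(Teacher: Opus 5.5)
Your proposal is correct and follows the paper's argument: (1)$\Rightarrow$(2) comes from Corollary \ref{cor3.04}, and for (2)$\Rightarrow$(1) you invoke Corollary \ref{cor3.10}. That corollary is exactly the paper's combination of Theorem \ref{thm3.09} (giving $\ast$-reversibility, hence reversibility) with Lemma \ref{lem3.01} applied to the projection $ab^{\ast}$, which yields $b^{\ast}a=ab^{\ast}\in P(R)$.
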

\begin{proof}
It suffices to prove the implication $(2)\Rightarrow(1)$.
Assume condition (2): if $ab\in P(R)$, then $ab^{\ast}\in P(R)$.
By Theorem \ref{thm3.09} and Lemma \ref{lem3.01}, $R$ is $\ast$-reversible,
then $R$ is reversible, which yields $ab^{\ast}=b^{\ast}a\in P(R)$.
Hence $R$ is pro-$\ast$-reversible, completing the proof.
\end{proof}

\begin{remark}\label{rem3.12}
In general, the implication $(3) \Rightarrow (1)$ in Question \ref{Q3.06} does not hold.
Let $R = \mathbb{Z}_{6} \oplus \mathbb{Z}_{6}$, with involution defined by $\alpha^{\ast} = (y, x)$ for $\alpha = (x, y) \in R$.
According to \cite[Example 2.3]{WFW1}, $R$ is not $\ast$-reversible, and hence not pro-$\ast$-reversible by Proposition \ref{prop3.05}.
Nevertheless, $R$ is commutative, so the condition $ab\in P(R)\Rightarrow ab^{\ast}=b^{\ast}a$
is automatically satisfied for all $a,b\in R$.
\end{remark}

We now address Question \ref{Q3.07}. We will construct an explicit example to show that a $\ast$-reversible ring need not be pro-$\ast$-reversible in general.

\begin{example}\label{exa3.16}
Take the ring $R=V_2(\mathbb{C})$ from \cite[Example 2.2]{WFW1}, where
\[
V_2(\mathbb{C})=\left\{
\begin{pmatrix}
a & b \\
0 & a
\end{pmatrix}
\;\bigg|\;
a,b\in\mathbb{C}
\right\}.
\]
Define an involution $\ast$ on $R$ by
\[
\begin{pmatrix}
a & b \\
0 & a
\end{pmatrix}^{\ast}
=
\begin{pmatrix}
a & -b \\
0 & a
\end{pmatrix}.
\]
It is known that $R$ is $\ast$-reversible (recall that the classes of $\ast$-reversible and $\ast$-symmetric rings coincide). Set
\[
a=\begin{pmatrix}
1 & 1 \\
0 & 1
\end{pmatrix},
\qquad
b=\begin{pmatrix}
1 & -1 \\
0 & 1
\end{pmatrix}.
\]
Direct computation yields $ab=E_2\in P(R)$, while
\[
ab^{\ast}=
\begin{pmatrix}
1 & 2 \\
0 & 1
\end{pmatrix}
\notin P(R).
\]
Note that $P(R)=\{O_2,E_2\}$. Consequently, $R$ is not pro-$\ast$-reversible.
\end{example}

\begin{remark}\label{rem3.14}
Summarizing the above results, we record the following interrelations between these ring classes:
\begin{enumerate}
    \item Every pro-$\ast$-reversible ring is $\ast$-reversible (Proposition \ref{prop3.05}), but the converse fails in general (Example \ref{exa3.16}).
    \item Every pro-$\ast$-reversible ring satisfies condition (3) of Question \ref{Q3.06}, but the converse fails in general (Remark \ref{rem3.12}).
    \item Condition (3) of Question \ref{Q3.06} does not guarantee that $R$ is $\ast$-reversible (Remark \ref{rem3.12}).
\end{enumerate}
\end{remark}

We next investigate further consequences of condition (3) from Question \ref{Q3.06}.

\begin{lemma}\label{lem3.15}
Suppose that $ab\in P(R)$ implies $b^{\ast}a=ab^{\ast}$ for all $a\in N(R)$ and $b\in R$. Then $ee^{\ast}=e^{\ast}e$ for all $e\in E(R)$.
\end{lemma}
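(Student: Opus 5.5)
The plan is to feed the hypothesis nilpotent elements built from $e$ for which the product $ab$ is $0$. Since $0\in P(R)$, each such choice gives a commutation relation, and we then assemble these relations into $ee^{\ast}=e^{\ast}e$. Fix $e\in E(R)$ and $x\in R$. The elements
\[
n_x=ex(1-e),\qquad m_x=(1-e)xe
\]
satisfy $n_x^2=m_x^2=0$, so both lie in $N(R)$. Moreover $n_x e=0$ and $m_x(1-e)=0$, and both products lie in $P(R)$. The hypothesis with $(a,b)=(n_x,e)$ gives $e^{\ast}n_x=n_xe^{\ast}$. With $(a,b)=(m_x,1-e)$ it gives $(1-e^{\ast})m_x=m_x(1-e^{\ast})$, which is equivalent to $e^{\ast}m_x=m_xe^{\ast}$. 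Hence $e^{\ast}$ commutes with every element of $eR(1-e)$ and of $(1-e)Re$.

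The element $e^{\ast}$ is again an idempotent, and $(e^{\ast})^{\ast}=e$. Running the same argument with $e^{\ast}$ in place of $e$ therefore shows that $e$ commutes with every element of $e^{\ast}R(1-e^{\ast})$ and of $(1-e^{\ast})Re^{\ast}$.

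Next, multiply these commutation relations on the left and right by $e$ and $1-e$, so that the two sides fall into different Peirce components and are forced to vanish. For instance, from $e^{\ast}ex(1-e)=ex(1-e)e^{\ast}$ we get two identities:
\[
(1-e)e^{\ast}ex(1-e)=0,\qquad ex(1-e)e^{\ast}e=0\quad\text{for all }x\in R.
\]
The analogous identities come from $m_x$ and from the symmetric relations for $e^{\ast}$. The goal is to derive
\[
ee^{\ast}(1-e)=0,\qquad (1-e)e^{\ast}e=0.
\]
These two equalities suffice, because they give $ee^{\ast}=ee^{\ast}e=e^{\ast}e$.

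To obtain them, the first thing I would try is specializing $x$ to $e^{\ast}$ or $1$ in the Peirce identities. For example, $x=e^{\ast}$ in the second identity above gives $ee^{\ast}(1-e)e^{\ast}e=0$. I would combine this with the identities coming from $e^{\ast}R(1-e^{\ast})$ commuting with $e$, such as $e\,e^{\ast}y(1-e^{\ast})=e^{\ast}y(1-e^{\ast})\,e$. The aim is to show that $u=ee^{\ast}(1-e)$ satisfies $uR u=0$ or $u u^{\ast}$-type relations that force $u=0$. The identity $ex(1-e)e^{\ast}e=0$ for all $x$, i.e.\ $eR\,(1-e)e^{\ast}e=0$, is the most promising starting point: taking $x=1$ and then $x=e^{\ast}$, and passing to involutions, should kill $(1-e)e^{\ast}e$ and, dually, $ee^{\ast}(1-e)$.

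This final extraction step is the main obstacle. The commutation relations come cheaply from the hypothesis, but turning products like $eR(1-e)e^{\ast}e=0$ into the vanishing of $(1-e)e^{\ast}e$ itself needs a suitable choice of $x$ combined with the involution. That is where I expect the real work, and possibly an extra choice of nilpotent such as $a=ee^{\ast}(1-e)$ with $b=e$, to be needed.
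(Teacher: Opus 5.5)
Your setup is correct and matches the paper's starting point. The pair $(ex(1-e),e)$ is the paper's first application of the hypothesis. The paper's second application ($a=e^{\ast}r(1-e)^{\ast}$, $b=e^{\ast}$) is your first relation run for the idempotent $e^{\ast}$. Your reduction of the lemma to $u:=ee^{\ast}(1-e)=0$ and $v:=(1-e)e^{\ast}e=0$ is also valid.

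The gap is that the proposal stops exactly where the content of the lemma lies, and the closing moves you sketch do not work. Taking $x=1$ in $eR(1-e)e^{\ast}e=0$ gives $0=0$. Taking $x=e^{\ast}$ gives only $uv=0$, and passing to involutions adds only $v^{\ast}u^{\ast}=0$. You also cannot finish by proving $uRu=0$ or $uu^{\ast}=0$. The ring $R$ is not assumed semiprime and the involution is not assumed proper; for instance, in $\mathbb{Z}_6\oplus\mathbb{Z}_6$ with the exchange involution, $(1,0)\neq 0$ but $(1,0)(1,0)^{\ast}=0$. Finally, your ``extra'' nilpotent $a=ee^{\ast}(1-e)$, $b=e$ is just $n_{e^{\ast}}$, which is already in your family. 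What is missing is not a new instance of the hypothesis but a way to extract $u=0$ from the instance you already have.

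The missing ingredient is the $eR(1-e)$-component of your commutation relation, used together with the fact that $e^{\ast}$ is idempotent. You discard that component because you keep only the components where one side vanishes. Write $e^{\ast}=w+u+v+z$ with $w=ee^{\ast}e$ and $z=(1-e)e^{\ast}(1-e)$.
\begin{itemize}
\item At $x=e^{\ast}$, the relation $e^{\ast}u=ue^{\ast}$ reads $wu+vu=uv+uz$. Comparing Peirce components gives $uv=vu=0$ and $wu=uz$.
\item The $eRe$ and $eR(1-e)$ components of $(e^{\ast})^2=e^{\ast}$ are $w^2+uv=w$ and $wu+uz=u$.
\item Hence $w^2=w$ and $u=2wu$. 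Then $wu=2w^2u=2wu$, so $wu=0$ and therefore $u=0$.
\item Likewise $e^{\ast}v=ve^{\ast}$ gives $zv=vw$. Together with $vw+zv=v$ this yields $v=2vw$, then $vw=0$, and so $v=0$.
\end{itemize}

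The paper closes the gap differently. It expands $ee^{\ast}(1-e)e^{\ast}=e^{\ast}ee^{\ast}(1-e)$ into $ee^{\ast}-e^{\ast}ee^{\ast}=ee^{\ast}ee^{\ast}-e^{\ast}ee^{\ast}e$. The right-hand side is self-adjoint, which gives $e^{\ast}ee^{\ast}=ee^{\ast}e$ and then $ee^{\ast}=e^{\ast}ee^{\ast}$. The second relation with $r=e$ then gives $e^{\ast}e=ee^{\ast}e$. Without one of these two computations, your argument is a correct reduction but not yet a proof.
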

\begin{proof}
For any idempotent $e\in E(R)$ and $r\in R$, we have $er(1-e)e=0\in P(R)$. By the hypothesis,
\begin{equation}\label{eq:2.1}
er(1-e)e^{\ast}=e^{\ast}er(1-e).
\end{equation}
Substitute $r=e^{\ast}$ into \eqref{eq:2.1}. This gives
\[
ee^{\ast}(1-e)e^{\ast}=e^{\ast}ee^{\ast}(1-e).
\]
Expanding both sides yields
\[
ee^{\ast}-ee^{\ast}ee^{\ast}=e^{\ast}ee^{\ast}-e^{\ast}ee^{\ast}e.
\]
Rearranging terms, we obtain
\begin{equation}\label{eq:2.2}
ee^{\ast}-e^{\ast}ee^{\ast}=ee^{\ast}ee^{\ast}-e^{\ast}ee^{\ast}e.
\end{equation}
Taking involution of both sides of \eqref{eq:2.2}, we see that $ee^{\ast}-e^{\ast}ee^{\ast}$ is self-adjoint. This forces the identities $ee^{\ast}e=e^{\ast}ee^{\ast}$ and $ee^{\ast}=e^{\ast}ee^{\ast}$.

On the other hand, observe that $e^{\ast}r(1-e)^{\ast}e^{\ast}=0$. Applying the hypothesis, we get
\[
e^{\ast}r(1-e)^{\ast}e=ee^{\ast}r(1-e)^{\ast}.
\]
Set $r=e$ in the above equality:
\[
e^{\ast}e(1-e)^{\ast}e=ee^{\ast}e(1-e)^{\ast}.
\]
Expanding gives
\[
e^{\ast}e-e^{\ast}ee^{\ast}e=ee^{\ast}e-ee^{\ast}ee^{\ast},
\]
which simplifies to $e^{\ast}e=ee^{\ast}e$. Combining this with $ee^{\ast}=e^{\ast}ee^{\ast}$, we conclude $ee^{\ast}=e^{\ast}e$.
\end{proof}

\begin{proposition}\label{prop3.16}
Let $R$ be a $\ast$-ring such that $ab\in P(R)$ implies $b^{\ast}a=ab^{\ast}$ for all $a\in N(R)$ and $b\in R$. Then $R$ is abelian.
\end{proposition}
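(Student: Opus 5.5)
The plan is to show, for an arbitrary $e\in E(R)$, that $er(1-e)=0$ and $(1-e)re=0$ for every $r\in R$; then $er=ere=re$, so $e\in C(R)$. The basic move is the one used in Lemma~\ref{lem3.15}. Put $a=er(1-e)$, so that $a^{2}=0$ and $a\in N(R)$. For any $s\in R$ put $b=es$. Then $ab=er(1-e)es=0\in P(R)$, and the hypothesis gives $(es)^{\ast}a=a(es)^{\ast}$, that is,
\[
t\,e^{\ast}a=a\,t\,e^{\ast}\qquad\text{for all } t\in R .
\]

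First I would specialise $t$. With $t=1$ we get $e^{\ast}a=ae^{\ast}$. With $t=1-e$, and using $a(1-e)=a$, we get $(1-e)e^{\ast}a=ae^{\ast}=e^{\ast}a$, hence $ee^{\ast}a=0$. Lemma~\ref{lem3.15} gives $ee^{\ast}=e^{\ast}e$. Since $ea=a$, this yields
\[
e^{\ast}a=e^{\ast}ea=ee^{\ast}a=0,
\]
and therefore also $ae^{\ast}=0$. Thus $e^{\ast}er(1-e)=0$ and $er(1-e)e^{\ast}=0$ for all $r$. The same argument applies to the idempotent $1-e$, using $a'=(1-e)re$ and $b=(1-e)s$. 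It also applies to the idempotents $e^{\ast}$ and $1-e^{\ast}$. Writing $f=ee^{\ast}=e^{\ast}e$, which is a projection, this produces the identities
\[
f\,r(1-e)=0,\qquad er(1-e)e^{\ast}=0,\qquad f\,r(1-e^{\ast})=0,
\]
together with their analogues for $1-e$.

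The main obstacle is the last step: combining these identities to conclude $er(1-e)=0$. Normality $ee^{\ast}=e^{\ast}e$ alone does not force $e=e^{\ast}$ in an arbitrary $\ast$-ring. So I would first try to prove $e=f$ directly. Note that $ef=fe=f$, that $(e-f)^{2}=e-f$, and that $(e-f)(e-f)^{\ast}=0$. I would then feed the nilpotent element $e-f$, or $(e-f)r(1-e)$, back into the hypothesis with a suitable $b$ satisfying $ab=0$, and try to extract $e-f=0$.

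Once $e=f=e^{\ast}$, the identity $er(1-e)e^{\ast}=0$ already gives $er(1-e)e=0$, which is trivial, so instead I would use $f\,r(1-e)=0$: with $f=e$ it gives $er(1-e)=0$ directly. Symmetrically, the analogous identity for $1-e$, namely $(1-e)(1-e)^{\ast}re=0$, gives $(1-e)re=0$, and $R$ is abelian. If proving $e=f$ fails, the fallback is to show $E(R)\subseteq P(R)$ by a different substitution and then invoke \cite[Proposition 3.4]{WFW1}, as in Lemma~\ref{lem3.08}.
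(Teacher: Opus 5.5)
Your computations are correct up to $e^{\ast}a=ae^{\ast}=0$ for $a=er(1-e)$. But the proof stops exactly at the step you call the main obstacle, and both routes you propose for it aim at statements that are false under the hypothesis. The ring $\mathbb{Z}_6\oplus\mathbb{Z}_6$ with $(x,y)^{\ast}=(y,x)$ from Remark~\ref{rem3.12} is commutative, so it satisfies the hypothesis of the proposition. Yet for $e=(1,0)$ one has $e^{\ast}=(0,1)$ and $f=ee^{\ast}=0\neq e$. So neither $e=ee^{\ast}$ nor $E(R)\subseteq P(R)$ can be derived. Also, $e-f$ is an idempotent, as you yourself note. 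It is therefore nilpotent only if it is already $0$, and it cannot serve as the nilpotent $a$ in the hypothesis.

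The missing step is one more application of the hypothesis; the paper makes the same move with $a=(1-e)^{\ast}er(1-e)$ and $b=e^{\ast}$. You already have $a\in N(R)$ and $ae^{\ast}=0\in P(R)$. Taking $b=e^{\ast}$, the hypothesis gives $b^{\ast}a=ab^{\ast}$, i.e.\ $ea=ae$. Since $ea=a$ and $ae=er(1-e)e=0$, this says $er(1-e)=0$ for all $r$. Applying the same to the idempotent $1-e$ gives $(1-e)re=0$, so $er=ere=re$.

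Alternatively, the identities you listed already suffice. The $1-e$ analogue of $fr(1-e^{\ast})=0$ is $(1-e)(1-e^{\ast})re^{\ast}=0$. Taking involution gives $er(1-e)(1-e^{\ast})=0$, and adding $er(1-e)e^{\ast}=0$ yields $er(1-e)=0$.

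With either completion, your route (first $b\in eR$, then Lemma~\ref{lem3.15}, then $b=e^{\ast}$) is a slightly shorter variant of the paper's argument. The paper instead works with $(1-e)^{\ast}er(1-e)$ and finishes with $A=[er(1-e)]^{\ast}$, $B=1-e$.
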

\begin{proof}
By Lemma \ref{lem3.15}, take any $e\in E(R)$ and $r\in R$.
Then $(1-e)^{\ast}er(1-e)\in N(R)$, and $(1-e)^{\ast}er(1-e)e=0$. The hypothesis implies
\[
(1-e)^{\ast}er(1-e)e^{\ast}=e^{\ast}(1-e)^{\ast}er(1-e)=0.
\]
Applying the hypothesis again to the equality $(1-e)^{\ast}er(1-e)e^{\ast}=0$, we obtain
\[
(1-e)^{\ast}er(1-e)e=e(1-e)^{\ast}er(1-e)=0.
\]
From Lemma \ref{lem3.15}, $e(1-e)^{\ast}er(1-e)=0$, so $(1-e)^{\ast}er(1-e)=0$.

Now set $A=\big[er(1-e)\big]^{\ast}$ and $B=1-e$. Then $AB=0$, and the assumed identity yields $AB^{\ast}=B^{\ast}A$, i.e.,
\[
\big[er(1-e)\big]^{\ast}(1-e)^{\ast}=(1-e)^{\ast}\big[er(1-e)\big]^{\ast}.
\]
This implies $er(1-e)=0$ for all $r\in R$, hence $E(R) \subseteq C(R)$, and $R$ is abelian.
\end{proof}

\begin{corollary}\label{cor3.17}
Suppose $ab\in P(R)$ implies $b^{\ast}a=ab^{\ast}$ for all $a,b\in R$. Then $R$ is abelian.
\end{corollary}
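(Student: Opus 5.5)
This follows directly from Proposition \ref{prop3.16}, since its hypothesis is strictly weaker. Suppose that $ab\in P(R)$ implies $b^{\ast}a=ab^{\ast}$ for \emph{all} $a,b\in R$. Then in particular it implies this identity for all $a\in N(R)$ and $b\in R$, because $N(R)\subseteq R$. So the hypothesis of Proposition \ref{prop3.16} holds, and that proposition gives $E(R)\subseteq C(R)$, that is, $R$ is abelian.

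The proof therefore has a single step: restrict the quantifier on $a$ from $R$ to $N(R)$, then invoke Proposition \ref{prop3.16}. Implicitly this also uses Lemma \ref{lem3.15}, which the proposition relies on. Lemma \ref{lem3.15} is stated under the same restricted hypothesis, so it applies here as well.

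If one preferred a self-contained argument, the plan would be to repeat the proof of Proposition \ref{prop3.16}. Fix $e\in E(R)$ and $r\in R$. Use $er(1-e)e=0\in P(R)$ together with the identity $ee^{\ast}=e^{\ast}e$ to kill $(1-e)^{\ast}er(1-e)$. Then apply the hypothesis to $A=[er(1-e)]^{\ast}$ and $B=1-e$ to conclude $er(1-e)=0$. The symmetric argument gives $(1-e)re=0$, and hence $e$ is central.

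There is no real obstacle, because the corollary is a specialization of the proposition. The only point needing care is to confirm that the elements plugged into the hypothesis in the proof of Proposition \ref{prop3.16} are legitimate. Under the stronger hypothesis this is automatic, since every element of $R$ is allowed in the first slot.
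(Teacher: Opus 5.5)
Your proposal is correct and matches the paper: Corollary~\ref{cor3.17} is stated without a separate proof because its hypothesis, quantified over all $a,b\in R$, specializes to the hypothesis of Proposition~\ref{prop3.16} (with $a\in N(R)$, $b\in R$), and that proposition then gives abelianness. Your self-contained sketch is a little loose, since the paper kills $(1-e)^{\ast}er(1-e)$ using $(1-e)^{\ast}er(1-e)e=0$ rather than $er(1-e)e=0$, but it is not needed. Your note that one must also apply the argument to $1-e$ to get $(1-e)re=0$ is a correct detail the paper leaves implicit.
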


\section{$\ast$-reversible and nil-$\ast$-reversible rings}

In this section, we further study generalized $\ast$-reversible rings.
Inspired by Lemma \ref{lem2.01},
we focus on $a\in N(R)$ and introduce the definition of nil-$\ast$-reversible rings below.

\begin{definition}\label{def4.01}
Let $R$ be a $\ast$-ring. We say $R$ is nil-$\ast$-reversible
if $ab=0$ implies $ab^{\ast}=b^{\ast}a=0$ for all $a\in N(R)$ and $b\in R$.
\end{definition}

\begin{lemma}\label{lem4.02}
Let $R$ be a nil-$\ast$-reversible ring. Then $l(a)=r(a)$ holds for every $a\in N(R)$.
\end{lemma}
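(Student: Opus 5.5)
The plan is to unwind the definition of nil-$\ast$-reversibility directly and prove the two inclusions $r(a)\subseteq l(a)$ and $l(a)\subseteq r(a)$ separately for a fixed $a\in N(R)$. The only auxiliary fact needed is that $N(R)$ is closed under the involution: if $a^{n}=0$ then $(a^{\ast})^{n}=(a^{n})^{\ast}=0$, so $a^{\ast}\in N(R)$. This lets us apply Definition \ref{def4.01} with either $a$ or $a^{\ast}$ in the nilpotent slot.

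For $r(a)\subseteq l(a)$, take $b\in r(a)$, so $ab=0$ with $a\in N(R)$. Definition \ref{def4.01} gives $ab^{\ast}=0$. We now apply the definition a second time, to the pair $(a,b^{\ast})$; this is legitimate since $a$ is still nilpotent. It yields $(b^{\ast})^{\ast}a=ba=0$, so $b\in l(a)$.

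For $l(a)\subseteq r(a)$, take $x\in l(a)$, so $xa=0$. Applying the involution gives $a^{\ast}x^{\ast}=0$, and here $a^{\ast}\in N(R)$. The definition applied to the pair $(a^{\ast},x^{\ast})$ gives $a^{\ast}(x^{\ast})^{\ast}=a^{\ast}x=0$. Applying it once more to $(a^{\ast},x)$ gives $x^{\ast}a^{\ast}=0$, that is, $(ax)^{\ast}=0$. Hence $ax=0$ and $x\in r(a)$.

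There is no serious obstacle here. The one point needing care is the second inclusion, where the nilpotent element must sit on the left in each application of the definition. This is why we first pass to $a^{\ast}$, which requires the observation that $a^{\ast}$ is nilpotent; the rest is bookkeeping with the involution.
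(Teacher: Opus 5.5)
Your proof is correct and follows essentially the same route as the paper. The paper proves $r(a)\subseteq l(a)$ via the pair $(a,b^{\ast})$, and $l(a)\subseteq r(a)$ by passing to $a^{\ast}x^{\ast}=0$ and applying the definition twice with $a^{\ast}$ in the nilpotent slot, exactly as you do; your explicit check that $(a^{\ast})^{n}=(a^{n})^{\ast}=0$ just fills in a step the paper only asserts.
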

\begin{proof}
For any $r\in l(a)$. Then $a^{\ast}r^{\ast}=0$.
Since $a^{\ast}\in N(R)$ and $R$ is nil-$\ast$-reversible,
the hypothesis gives $a^{\ast}r=0$.
Applying the nil-$\ast$-reversible condition again yields $r^{\ast}a^{\ast}=0$,
which means $ar=0$. Thus $l(a)\subseteq r(a)$.

Conversely, let $r\in r(a)$, so $ar=0$. By the nil-$\ast$-reversible property, $ar^{\ast}=0$. Applying the same condition again, we get $ra=0$, hence $r(a)\subseteq l(a)$. Combining the two inclusions, we obtain $l(a)=r(a)$.
\end{proof}

Recall that a ring $R$ is nil reversible if $ab=0$ implies $ba=0$ for all $a\in N(R)$ and $b\in R$. It follows immediately from Lemma \ref{lem4.02} that every nil-$\ast$-reversible ring is nil reversible. The example below demonstrates that the reverse implication fails in general.

\begin{example}\label{ex4.03}
Let $S$ be a reduced ring which is not $\ast$-reversible. Then there exist $a,b\in S$ satisfying $ab=0$ and $ab^{\ast}\neq 0$. Consider the ring
\[
R=V_{2}(S)
=\left\{
\begin{pmatrix}
x & y \\
0 & x
\end{pmatrix}
\;\bigg|\;
x, y\in S
\right\}
\]
with involution $\ast$ defined by
\[
\begin{pmatrix}
x & y \\
0 & x
\end{pmatrix}^{\ast}
=
\begin{pmatrix}
x^{\ast} & -y^{\ast} \\
0 & x^{\ast}
\end{pmatrix}.
\]
The ring $V_2(S)$ is nil reversible.
In fact, $V_2(S)$ is reversible,
and every reversible ring is nil reversible. Now set
\[
A=\begin{pmatrix}
0 & a \\
0 & 0
\end{pmatrix},
\qquad
B=\begin{pmatrix}
b & c \\
0 & b
\end{pmatrix}\in V_2(S)
\]
for some $c\in S$. Direct calculation shows $AB=O_2$, while
\[
AB^{\ast}=
\begin{pmatrix}
0 & ab^{\ast} \\
0 & 0
\end{pmatrix}\neq O_2.
\]
Accordingly, $V_2(S)$ is not nil-$\ast$-reversible.
\end{example}

\begin{theorem}\label{thm4.04}
Let $R$ be a $\ast$-ring, and let $a\in N(R)$, $b\in R$. The following statements are equivalent:

(1) $R$ is nil $\ast$-reversible.

(2) $l(a)=l(a^{\ast})=r(a)$.
\end{theorem}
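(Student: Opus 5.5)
We read the statement as in Lemma \ref{lem2.02}: condition (2) means that $l(a)=l(a^{\ast})=r(a)$ holds for every $a\in N(R)$. We prove the two implications separately. The only structural fact needed is that $N(R)$ is closed under the involution: if $a^n=0$ then $(a^{\ast})^n=(a^n)^{\ast}=0$.

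For $(1)\Rightarrow(2)$, let $a\in N(R)$. Lemma \ref{lem4.02} already gives $l(a)=r(a)$, so it remains to show $l(a)=l(a^{\ast})$.
\begin{enumerate}
\item Take $x\in l(a)$, so $xa=0$. Then $x\in r(a)$, that is, $ax=0$. Since $a\in N(R)$, Definition \ref{def4.01} gives $ax^{\ast}=0$. Applying the involution yields $xa^{\ast}=0$, so $x\in l(a^{\ast})$.
\item Conversely, take $y\in l(a^{\ast})$. Since $a^{\ast}\in N(R)$, Lemma \ref{lem4.02} applied to $a^{\ast}$ gives $a^{\ast}y=0$. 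Definition \ref{def4.01} with the nilpotent $a^{\ast}$ then gives $a^{\ast}y^{\ast}=0$. Applying the involution yields $ya=0$, so $y\in l(a)$.
\end{enumerate}

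For $(2)\Rightarrow(1)$, let $a\in N(R)$ and $b\in R$ with $ab=0$. We must show $ab^{\ast}=0$ and $b^{\ast}a=0$.
\begin{enumerate}
\item First, $b\in r(a)=l(a)$, so $ba=0$. Applying the involution gives $a^{\ast}b^{\ast}=0$.
\item Hence $b^{\ast}\in r(a^{\ast})$. Applying (2) to the nilpotent $a^{\ast}$ gives $r(a^{\ast})=l(a^{\ast})$, so $b^{\ast}a^{\ast}=0$.
\item Thus $b^{\ast}\in l(a^{\ast})=l(a)$, which gives $b^{\ast}a=0$.
\item Finally, $b^{\ast}\in l(a)=r(a)$, which gives $ab^{\ast}=0$.
\end{enumerate}

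The main obstacle is making sure that every application of (2), or of Lemma \ref{lem4.02}, is to a nilpotent element. This is the reason we pass through $a^{\ast}$ rather than through $b$, which need not be nilpotent. The closure of $N(R)$ under $\ast$ guarantees that each such step is legitimate.
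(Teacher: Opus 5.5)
Your proof is correct and takes essentially the paper's route: Lemma~\ref{lem4.02} plus the definition for $(1)\Rightarrow(2)$, and annihilator chasing with the involution for $(2)\Rightarrow(1)$. The paper works with $r(a)=l(a^{\ast})$ and $l(a)=r(a)$ rather than your $l(a)=l(a^{\ast})$, which is only a cosmetic difference. Your first two steps in $(2)\Rightarrow(1)$ can be dropped, since $b^{\ast}a^{\ast}=(ab)^{\ast}=0$ already gives $b^{\ast}\in l(a^{\ast})$.
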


\begin{proof}
$(1)\Rightarrow(2)$. By Lemma \ref{lem4.02}, it suffices to show $l(a^{\ast}) = r(a)$.
Take arbitrary $x\in r(a)$. Since $ax=0$, condition (1) yields $ax^{\ast}=0$, and then $xa^{\ast}=0$.
Thus $r(a) \subseteq l(a^{\ast})$.

Now take any $y\in l(a^{\ast})$, that is, $ay^{\ast}=0$. Then by (1) we get $ay=0$.
Hence $y\in r(a)$, so $l(a^{\ast}) \subseteq r(a)$.
Combining the two inclusions, we obtain $l(a^{\ast}) = r(a)$.

$(2)\Rightarrow(1)$. Suppose $ab=0$. Since $l(a^{\ast})=r(a)$, we have $b\in l(a^{\ast})$, i.e., $ba^{\ast}=0$, which gives $ab^{\ast}=0$.
In addition, $l(a)=r(a)$ implies that $R$ is nil-reversible.
Together with $ab^{\ast}=0$, we conclude $b^{\ast}a = ab^{\ast}=0$.
\end{proof}

The following proposition provides several characterizations of nil-$\ast$-reversible rings,
analogous to those for reversible rings (see Lemma \ref{lem3.01}).

\begin{lemma}\label{lem4.05}
Every nil-reversible ring is abelian.
\end{lemma}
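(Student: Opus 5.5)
Let $e\in E(R)$ and $r\in R$. We show $er(1-e)=0$ and $(1-e)re=0$; then $er=ere=re$, so $e\in C(R)$ and $R$ is abelian. This is the usual argument that reversible rings are abelian, but here the hypothesis applies only when the left factor is nilpotent, so we must arrange for that.

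Set $a=er(1-e)$. Then $a^{2}=er(1-e)er(1-e)=0$, because $(1-e)e=0$. Hence $a\in N(R)$.

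Next observe that $ae=er(1-e)e=0$. Since $a\in N(R)$, the nil-reversible condition applies to the pair $(a,e)$ and gives $ea=0$. But
\[
ea=e\cdot er(1-e)=er(1-e)=a,
\]
so $a=0$, that is, $er(1-e)=0$.

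For the other term, set $a'=(1-e)re$. By the same computation $(a')^{2}=0$, so $a'\in N(R)$. Also $a'(1-e)=0$. Applying nil-reversibility to the pair $(a',1-e)$ yields $(1-e)a'=0$. Since $(1-e)a'=a'$, this gives $(1-e)re=0$.

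Combining the two identities, $er=ere=re$ for all $r\in R$, so every idempotent is central.

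The only real obstacle is ensuring the left factor in each product is nilpotent, as the definition of nil reversible requires. We avoid the problem by always putting the square-zero element $er(1-e)$ (respectively $(1-e)re$) on the left and the idempotent on the right. This choice is exactly what makes the hypothesis applicable, and no annihilator results from earlier in the paper are needed.
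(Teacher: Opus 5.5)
Your proof is correct and follows the same route as the paper: apply nil-reversibility to the square-zero element $er(1-e)$ and the idempotent $e$, which gives $er(1-e)=0$. You also spell out two points the paper leaves implicit, namely that $er(1-e)$ is nilpotent and the companion identity $(1-e)re=0$ (which is the same argument applied to the idempotent $1-e$), so that $er=ere=re$.
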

\begin{proof}
Let $R$ be a nil-reversible ring and $e\in E(R)$.
For any $r\in R$, we have $er(1-e)e=0$.
Since $R$ is nil-reversible, the defining condition yields $e\cdot er(1-e)=0$, i.e., $er(1-e)=0$.
Then, we conclude that $R$ is abelian.
\end{proof}

\begin{proposition}\label{prop4.06}
Let $R$ be a $\ast$-ring, $a\in N(R)$ and $b\in R$. The following statements are equivalent:

(1) $R$ is nil $\ast$-reversible.

(2) $ab \in E(R)$ implies that $ab^{\ast}=b^{\ast}a \in E(R)$.

(3) $ab \in P(R)$ implies that $ab^{\ast}=b^{\ast}a \in P(R)$.
\end{proposition}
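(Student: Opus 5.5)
The plan is to prove $(1)\Rightarrow(2)\Rightarrow(3)\Rightarrow(1)$. Every implication will rest on one observation: under the relevant hypotheses, an idempotent built from the nilpotent element $a$ must be $0$. Throughout, fix $n$ with $a^{n}=0$.

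\textbf{Step $(1)\Rightarrow(2)$.} Assume $R$ is nil-$\ast$-reversible and $ab=e\in E(R)$. By Lemma \ref{lem4.02}, $R$ is nil reversible, and so by Lemma \ref{lem4.05} it is abelian; hence $e$ is central. I will show by induction on $k$ that $a^{k}b^{k}=e$ for all $k\geq 1$. The case $k=1$ is the hypothesis. For the inductive step,
\[
a^{k+1}b^{k+1}=a\,(a^{k}b^{k})\,b=aeb=e\,ab=e^{2}=e ,
\]
using centrality of $e$. Taking $k=n$ gives $e=a^{n}b^{n}=0$, so in fact $ab=0$. Condition (1) then yields $ab^{\ast}=b^{\ast}a=0\in E(R)$, which is (2).

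\textbf{Step $(2)\Rightarrow(3)$.} Let $ab\in P(R)$. Since $P(R)\subseteq E(R)$, condition (2) gives $ab^{\ast}=b^{\ast}a=:f\in E(R)$, and it remains to show $f\in P(R)$. I will show $f=0$ using only the commutation relation $ab^{\ast}=b^{\ast}a$. Specifically, I claim $f^{k}=a^{k}(b^{\ast})^{k}$ for all $k$. The inductive step moves one factor of $a$ leftward through the powers of $b^{\ast}$:
\[
f^{k+1}=a^{k}(b^{\ast})^{k}\,b^{\ast}a=a^{k}(b^{\ast})^{k}\,ab^{\ast}.
\]
Now apply $b^{\ast}a=ab^{\ast}$ repeatedly to $(b^{\ast})^{k}a$, which commutes $a$ past all $k$ factors $b^{\ast}$ and gives $a^{k+1}(b^{\ast})^{k+1}$. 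Taking $k=n$, we get $f=f^{n}=a^{n}(b^{\ast})^{n}=0\in P(R)$.

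\textbf{Step $(3)\Rightarrow(1)$.} Suppose $ab=0$. Then $ab\in P(R)$, so (3) gives $ab^{\ast}=b^{\ast}a=f\in P(R)\subseteq E(R)$. The same computation as in the previous step gives $f=f^{n}=a^{n}(b^{\ast})^{n}=0$. Hence $ab^{\ast}=b^{\ast}a=0$, which is exactly the definition of nil-$\ast$-reversibility.

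The main obstacle is the step in $(1)\Rightarrow(2)$ that forces $ab=0$ from $ab\in E(R)$ when $a$ is nilpotent. This fails in general rings: with matrix units in $M_{2}$, $a=E_{12}$ is nilpotent, yet $ab=E_{11}$ for $b=E_{21}$. So the abelian property supplied by Lemma \ref{lem4.05} is genuinely needed to run the telescoping identity $a^{k}b^{k}=e$. In the other two implications the commutation $ab^{\ast}=b^{\ast}a$ supplied by the hypotheses replaces centrality, so no abelian argument is needed there. The only care required is to carry out the induction moving $a$ past the powers of $b^{\ast}$ correctly.
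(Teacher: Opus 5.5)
Your proof is correct. Your $(1)\Rightarrow(2)$ is the paper's argument: you get abelianness from Lemmas \ref{lem4.02} and \ref{lem4.05}, then $ab=a^nb^n=0$. You simply write out the induction that the paper leaves implicit.

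The other two implications follow a different route.

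For $(2)\Rightarrow(3)$, the paper invokes Proposition \ref{prop3.16} to make $R$ abelian. Then $ab^{\ast}$ is central, which gives $(ab^{\ast})^n=a^n(b^{\ast})^n$. You observe instead that $ab^{\ast}=b^{\ast}a$ already says $a$ and $b^{\ast}$ commute, and that gives the same identity directly. This bypasses Proposition \ref{prop3.16}, and with it the delicate computations of Lemma \ref{lem3.15}.

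For $(3)\Rightarrow(1)$, the paper uses self-adjointness rather than nilpotency. It computes $b^{\ast}aa^{\ast}=ab^{\ast}a^{\ast}=a(ab)^{\ast}=0$ and concludes $b^{\ast}a=(b^{\ast}a)(b^{\ast}a)^{\ast}=0$, the same trick as in Proposition \ref{prop3.05}.

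The two versions of $(3)\Rightarrow(1)$ each have an advantage:
\begin{itemize}
\item Your argument uses only that $b^{\ast}a$ is idempotent. It therefore shows slightly more: for nilpotent $a$, the condition ``$ab=0$ implies $ab^{\ast}=b^{\ast}a\in E(R)$'' already forces nil-$\ast$-reversibility.
\item The paper's argument never uses $a\in N(R)$ in that step, so it also works for arbitrary $a$, as in Section 3.
\end{itemize}

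Overall, your single mechanism makes the whole equivalence self-contained: an idempotent $ab^{\ast}$, with $a$ nilpotent and commuting with $b^{\ast}$, must vanish.
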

\begin{proof}
$(1)\Rightarrow(2)$. By Lemma \ref{lem4.05}, $R$ is abelian.
Suppose $ab \in E(R)$. Since $a\in N(R)$, there exists a positive integer $n$ such that $a^n=0$, which gives $ab=a^n b^n=0$, it is because $ab\in C(R)$. Hence, $ab^{\ast}=b^{\ast}a=0\in E(R)$.

$(2)\Rightarrow(3)$. Let $ab \in P(R)$. Then $ab\in E(R)$.
By condition (2), $ab^{\ast}=b^{\ast}a$.
From Proposition \ref{prop3.16}, $R$ is abelian. Together with $ab^{\ast}\in E(R)$, we have $ab^{\ast}\in C(R)$.
Since $a\in N(R)$, there exists $n\in \mathbb{N}^+$ satisfying
\[
ab^{\ast}=(ab^{\ast})^n=a^n(b^{\ast})^n=0\in P(R).
\]

$(3)\Rightarrow(1)$.
Suppose $ab=0$. Then $ab\in P(R)$, so condition (3) yields $ab^{\ast}=b^{\ast}a \in P(R)$.
We compute:
\[
b^{\ast}aa^{\ast}=ab^{\ast}a^{\ast}=a(ab)^{\ast}=0.
\]
As $b^{\ast}a \in P(R)$, we can see that
\[
b^{\ast}a=(b^{\ast}a)(b^{\ast}a)=(b^{\ast}a)(b^{\ast}a)^{\ast}=b^{\ast}aa^{\ast}b=0.
\]
This proves $ab^{\ast}=b^{\ast}a=0$, so $R$ is nil-$\ast$-reversible.
\end{proof}

We now analyze several conditions involving nilpotents $a\in N(R)$, which are listed below:
\begin{align*}
(1)&\ ab\implies ab^{\ast}=b^{\ast}a;\\
(2)&\ ab\in P(R)\implies ab^{\ast}\in P(R);\\
(3)&\ ab\in P(R)\implies b^{\ast}a\in P(R).
\end{align*}
In fact, Proposition \ref{prop3.16} tells us that if $ab\in P(R)\implies ab^{\ast}=b^{\ast}a$ holds for all $a\in N(R)$,
then $R$ must be abelian.
Under this abelian setting, the condition $ab\in P(R)$ simplifies to $ab=0$.
Indeed, take any $a\in N(R)$, there exists a positive integer $n$ with $a^n=0$, so $ab=(ab)^n=a^n b^n=0$.

Nevertheless, the following example shows that a ring may satisfy $ab=0\implies ab^{\ast}=b^{\ast}a$ without being nil-$\ast$-reversible. In other words, $ab^{\ast}$ and $b^{\ast}a$ need not to be zero in general.

\begin{example}\label{exa4.07}
Let $R=\mathbb{Z}_9\oplus\mathbb{Z}_9$. Define an involution on $R$ by $\alpha^{\ast}=(y,x)$ for every $\alpha=(x,y)\in R$. Since $R$ is commutative, the implication $ab\in P(R)\implies ab^{\ast}=b^{\ast}a$ holds for all $a\in N(R)$.
Set $A=(3,0)$ and $B=(3,2)$. Clearly $A\in N(R)$ and $AB=0$, but $AB^{\ast}=(6,0)\neq 0$. Hence $R$ is not nil-$\ast$-reversible.
\end{example}

We next establish a characterization for nil-$\ast$-reversible rings.

\begin{proposition}\label{prop4.08}
Let $R$ be a $\ast$-ring, $a\in N(R)$ and $b\in R$. The following two statements are equivalent:

(1) $ab=0$ implies $b^{\ast}a=0$.

(2) $ab \in P(R)$ implies $b^{\ast}a \in P(R)$.
\end{proposition}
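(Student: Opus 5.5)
The implication $(1)\Rightarrow(2)$ should be the easy direction; I would model it on the proof of $(1)\Rightarrow(2)$ in Proposition \ref{prop4.06}. The plan is to first show that (1) forces $R$ to be abelian, so that every projection is central. For $e\in E(R)$ and $r\in R$, the element $a=er(1-e)$ is nilpotent (its square is $0$) and satisfies $a\cdot e=0$. Condition (1) then gives $e^{\ast}er(1-e)=0$. Applying (1) once more to $\big(er(1-e)\big)\cdot e^{\ast}$, or arguing as in Proposition \ref{prop3.16}, should give $er(1-e)=0$, hence $E(R)\subseteq C(R)$. Now let $a\in N(R)$ with $a^n=0$ and $ab\in P(R)$. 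Since $ab$ is a central idempotent, we get $ab=(ab)^n=a^nb^n=0$. By (1), $b^{\ast}a=0\in P(R)$. If the abelian step turns out to be awkward, the argument only needs $ab=0$, and I would try to extract this directly from $ab=(ab)^n$ by moving the central factors $ab$ past one another.

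For $(2)\Rightarrow(1)$, let $a\in N(R)$ with $a^n=0$ and $ab=0$. Then $ab\in P(R)$, so $p:=b^{\ast}a$ is a projection, and the task is to show $p=0$. The key extra freedom is that $a(bx)=0\in P(R)$ for every $x\in R$. Applying (2) to the pair $(a,bx)$ therefore gives that $x^{\ast}p=(bx)^{\ast}a$ is a projection for every $x$. Equivalently, $cp\in P(R)$ for all $c\in R$. Two consequences follow:
\begin{itemize}
\item taking $c=-1$ gives $2p=0$;
\item for every $c$ we have $cp=(cp)^{\ast}=pc^{\ast}$ and $(cp)^2=cp$.
\end{itemize}

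I would then take $c=a^k$ and set $q_k=a^kp=p(a^{\ast})^k\in P(R)$. Here $q_0=p$ and $q_n=0$. The aim is a descending induction showing $q_k=0\Rightarrow q_{k-1}=0$. The relations available are $q_k=q_k^2=a^kp\,a^kp$ and $q_k=q_k^{\ast}q_k=p(a^{\ast})^k a^kp$, together with $cq_{k-1}\in P(R)$ for all $c$. The most promising trick is to choose $c$ cleverly, for instance $c=1+a$, or $c=u$ with $u$ built from $p$ and $a$. Idempotency of $(1+a)q_{k-1}=q_{k-1}+q_k$ gives $q_{k-1}=q_{k-1}+q_{k-1}q_k+q_kq_{k-1}$ once $q_k=0$, which is consistent but not yet conclusive. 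More useful should be $c=a^{k-1}+y$ for a suitable $y$, forcing a cross term like $q_{k-1}q_{k-1}$ to be expressed through $a^{2k-2}p$.

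This last step is where I expect the main obstacle: turning ``$cp$ is a projection for every $c$'' plus nilpotency of $a$ into $p=0$. If the direct descent stalls, my fallback is to first prove, as in the first direction, that (2) forces $R$ to be abelian, using nilpotent elements of the form $er(1-e)$ and mimicking Lemma \ref{lem3.15} and Proposition \ref{prop3.16}. Once $p$ is central, the argument closes quickly: $p=p^n=(b^{\ast})^na^n=0$, as in the abelian remark preceding Example \ref{exa4.07}.
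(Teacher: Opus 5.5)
\textbf{There is a genuine gap.} Both directions of your plan rest on the claim that the hypothesis forces $R$ to be abelian, and that claim is false.

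The paper's own Example \ref{exa4.10} is a counterexample. Take $R=T_2(\mathbb{C})$ with $\begin{pmatrix} x & y\\ 0 & z\end{pmatrix}^{\ast}=\begin{pmatrix} z & y\\ 0 & x\end{pmatrix}$.
\begin{itemize}
\item Every nilpotent has the form $te_{12}=(te_{12})^{\ast}$. If $te_{12}b=0$, the $(2,2)$-entry of $b$ is $0$, and then $b^{\ast}te_{12}=0$. So (1) holds.
\item $P(R)=\{0,1\}$, and $ab$ is never $1$ for nilpotent $a$, so (2) holds as well.
\item Yet $e=e_{11}$ is not central. With $r=e_{12}$ you get $e^{\ast}er(1-e)=e_{22}e_{12}=0$, exactly as your first application of (1) predicts. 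But $er(1-e)=e_{12}\neq 0$, and $er(1-e)e^{\ast}=e_{12}\neq 0$, so no second application of (1) is available.
\end{itemize}
For the same reason, your fallback in $(2)\Rightarrow(1)$ (``(2) forces $R$ abelian'') cannot work. Lemma \ref{lem3.15} and Proposition \ref{prop3.16} also use a different hypothesis, so mimicking them does not transfer.

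What both directions actually need, and what the paper proves, is only $P(R)\subseteq C(R)$. Your own computation gives this once you take $e=p\in P(R)$.
\begin{itemize}
\item Under (1): since $p^{\ast}p=p$, applying (1) to the square-zero element $pr(1-p)$ and $pr(1-p)\cdot p=0$ gives $pr(1-p)=0$ directly. Doing the same with $1-p$ gives $(1-p)rp=0$, hence $pr=prp=rp$.
\item Under (2): the same pair gives $p^{\ast}pr(1-p)=pr(1-p)\in P(R)$. This is a nilpotent idempotent, hence $0$, and the same holds with $1-p$.
\end{itemize}
Once projections are central, your closing computations go through: $ab=(ab)^n=a^nb^n=0$ in one direction, and $b^{\ast}a=(b^{\ast})^na^n=0$ in the other.

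Alternatively, your observation that $cp\in P(R)$ for all $c\in R$, where $p=b^{\ast}a$, does finish $(2)\Rightarrow(1)$ by a genuinely different route. You left the decisive step open; it goes as follows.
\begin{itemize}
\item Polarizing $(cp)^2=cp$ at $c$ and $1$ gives $cp+pcp=0$.
\item Since $2p=0$, this becomes $cp=pcp$.
\item Applying this to $c^{\ast}$ and then taking $\ast$ gives $pc=pcp$.
\item Hence $p$ is central, and $p=(b^{\ast})^na^n=0$.
\end{itemize}
The descent through $q_k=a^kp$ is not needed.
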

\begin{proof}
$(1)\Rightarrow(2)$. We first claim all projections of $R$ are central, i.e., $P(R)\subseteq C(R)$.
Take any $p\in P(R)$ and $r\in R$. We have $pr(1-p)p=0$. By condition (1), $p^{\ast}pr(1-p)=0$, which yields $pr(1-p)=0$.
Similarly, $(1-p)rp=0$, and condition (1) gives $(1-p)^{\ast}(1-p)rp=0$, so $(1-p)rp=0$. This forces $pr=rp$.

Now assume $ab\in P(R)$. Since $a\in N(R)$, there exists $n\in\mathbb{N}$ such that $a^n=0$, hence $ab=(ab)^n=a^n b^n=0$. Condition (1) then gives $b^{\ast}a=0\in P(R)$.

$(2)\Rightarrow(1)$.  We first verify $pr=rp$ for all $p\in P(R)$.
Let $p\in P(R)$ and $r\in R$. Note $pr(1-p)p=0\in P(R)$. By (2), $p^{\ast}pr(1-p)\in P(R)$, so $pr(1-p)\in P(R)$.
Observe $pr(1-p)\in N(R)$. There exists $m\in\mathbb{N}$ satisfying $\big[pr(1-p)\big]^m=0$, so $pr(1-p)=0$. By symmetry, $(1-p)rp=0$, which implies $pr=rp$.

Now suppose $ab=0$. Condition (2) guarantees $b^{\ast}a\in P(R)$. Since $a\in N(R)$, we have $a^n=0$ for some $n\in\mathbb{N}$. Thus
$b^{\ast}a=(b^{\ast}a)^n=(b^{\ast})^n a^n=0$.
\end{proof}

A natural question arises: if $ab=0$ yields $b^{\ast}a=0$ for all $a\in N(R),b\in R$, does it necessarily follow that $ab^{\ast}=0$? Equivalently, if a $\ast$-ring satisfies condition (1) of Proposition \ref{prop4.08}, is it nil-$\ast$-reversible? We construct an example to give a negative answer, and first prepare an auxiliary lemma.

\begin{lemma}\label{lem4.09}
Let $R$ be a $\ast$-ring, and let $a\in N(R)$, $b\in R$. The following statements are equivalent:

(1) $ab=0$ implies $b^{\ast}a=0$.

(2)  $r(a) = r(a^{\ast})$.

(3)  $l(a) = l(a^{\ast})$.
\end{lemma}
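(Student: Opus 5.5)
We read the statement as quantified over all nilpotents: (1) means that for every $a\in N(R)$ and $b\in R$, $ab=0$ implies $b^{\ast}a=0$, while (2) and (3) mean that the annihilator equalities hold for every $a\in N(R)$. The key observation is that $N(R)$ is closed under the involution: $a^n=0$ gives $(a^{\ast})^n=(a^n)^{\ast}=0$. So any hypothesis assumed for all nilpotents may be applied to $a^{\ast}$ as well as to $a$. We prove $(1)\Rightarrow(2)\Rightarrow(3)\Rightarrow(1)$, imitating Lemma \ref{lem2.02} but using only the one-sided condition, since nil-$\ast$-reversibility is not available.

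For $(1)\Rightarrow(2)$, take $x\in r(a)$, so $ax=0$. By (1), $x^{\ast}a=0$. Applying the involution gives $a^{\ast}x=0$, so $x\in r(a^{\ast})$. Conversely, take $y\in r(a^{\ast})$, so $a^{\ast}y=0$. Since $a^{\ast}\in N(R)$, (1) gives $y^{\ast}a^{\ast}=0$, and the involution yields $ay=0$. Hence $r(a)=r(a^{\ast})$.

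For $(2)\Rightarrow(3)$, let $x\in l(a)$, so $xa=0$. Then $a^{\ast}x^{\ast}=0$, that is, $x^{\ast}\in r(a^{\ast})=r(a)$. So $ax^{\ast}=0$, and applying the involution gives $xa^{\ast}=0$, so $x\in l(a^{\ast})$. The reverse inclusion is symmetric: from $ya^{\ast}=0$ we get $ay^{\ast}=0$, so $y^{\ast}\in r(a)=r(a^{\ast})$. Then $a^{\ast}y^{\ast}=0$, and the involution gives $ya=0$.

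For $(3)\Rightarrow(1)$, suppose $ab=0$. This gives $b^{\ast}a^{\ast}=0$, so $b^{\ast}\in l(a^{\ast})=l(a)$, that is, $b^{\ast}a=0$.

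The only delicate point is being careful to apply the hypothesis to $a^{\ast}$ in the second inclusion of $(1)\Rightarrow(2)$. This is where closure of $N(R)$ under $\ast$ is essential; otherwise the argument is direct symbol-pushing.
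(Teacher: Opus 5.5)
Your proof is correct and follows the paper's argument step for step: the same cycle $(1)\Rightarrow(2)\Rightarrow(3)\Rightarrow(1)$, with the same manipulations of annihilators via the involution. Your explicit remark that $N(R)$ is closed under $\ast$ is worth keeping. It is needed to apply (1) to $a^{\ast}$ in the reverse inclusion of $(1)\Rightarrow(2)$, a point the paper uses only implicitly.
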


\begin{proof}
$(1) \Rightarrow (2)$.
Take arbitrary $x\in r(a)$, so $ax=0$.
By condition (1), we obtain $x^{\ast}a=0$.
Taking involution on both sides yields $a^{\ast}x=0$,
which means $x\in r(a^{\ast})$. Hence $r(a)\subseteq r(a^{\ast})$.

Conversely, for any $y\in r(a^{\ast})$, so $a^{\ast}y=0$.
Applying condition (1) to this equality gives $y^{\ast}a^{\ast}=0$, and then $ay=0$, so $y\in r(a)$.
Therefore $r(a^{\ast})\subseteq r(a)$.
Combining the two inclusions, we conclude $r(a)=r(a^{\ast})$.

$(2) \Rightarrow (3)$.
For any $x\in l(a)$, so $xa=0$.
Taking involution yields $a^{\ast}x^{\ast}=0$, i.e., $x^{\ast}\in r(a^{\ast})$.
By condition (2), $r(a^{\ast})=r(a)$, so $ax^{\ast}=0$.
Taking involution again, we obtain $xa^{\ast}=0$,
which implies $x\in l(a^{\ast})$. Thus $l(a)\subseteq l(a^{\ast})$.

Conversely, take arbitrary $y\in l(a^{\ast})$, so $ya^{\ast}=0$.
Taking involution gives $ay^{\ast}=0$, so $y^{\ast}\in r(a)$. By condition (2), $r(a)=r(a^{\ast})$, hence $a^{\ast}y^{\ast}=0$. Taking involution yields $ya=0$, which means $y\in l(a)$. Therefore $l(a^{\ast})\subseteq l(a)$.
Together we have $l(a)=l(a^{\ast})$.

$(3) \Rightarrow (1)$.
Suppose $ab=0$. Taking involution gives $b^{\ast}a^{\ast}=0$, so $b^{\ast}\in l(a^{\ast})$.
By condition (3), $l(a^{\ast})=l(a)$, which implies $b^{\ast}\in l(a)$, i.e., $b^{\ast}a=0$.
\end{proof}

\begin{example}\label{exa4.10}
Let $R=T_2(\mathbb{C})$, equipped with the involution defined by
\[
\begin{pmatrix}
a & b \\
0 & c
\end{pmatrix}^{\ast}
=
\begin{pmatrix}
c & b \\
0 & a
\end{pmatrix}.
\]
By Lemma \ref{lem4.09}, $R$ satisfies condition (1) of Proposition \ref{prop4.08}, since $l(x)=l(x^{\ast})$ holds for all $x\in N(R)$.
Take $a=e_{12}\in N(R)$ and $b=e_{11}$. Direct computation gives $ab=O_2$, but $ba=e_{12}\neq 0$. This shows $R$ fails to be nil-reversible, so $R$ cannot be nil-$\ast$-reversible.
\end{example}

\begin{proposition}\label{prop4.11}
Let $R$ be a $\ast$-ring, and let $a\in N(R)$, $b\in R$. The following statements are equivalent:

(1) $ab=0$ implies $ab^{\ast}=0$.

(3) $l(a) = r(a^{\ast})$.
\end{proposition}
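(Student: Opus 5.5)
Both directions should follow by direct element chasing, in the style of Lemma \ref{lem4.09}. The one point to record at the start is that $N(R)$ is closed under the involution: if $a^{n}=0$ then $(a^{\ast})^{n}=(a^{n})^{\ast}=0$. So whenever $a\in N(R)$, each condition may also be applied to $a^{\ast}$. Both (1) and (3) are read as quantified over all $a\in N(R)$ (and all $b\in R$ in (1)).

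For $(1)\Rightarrow(3)$, I will show both inclusions.

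\emph{Inclusion $l(a)\subseteq r(a^{\ast})$.} Let $x\in l(a)$, so $xa=0$. Applying the involution gives $a^{\ast}x^{\ast}=0$. Since $a^{\ast}\in N(R)$, condition (1) with the pair $(a^{\ast},x^{\ast})$ gives $a^{\ast}(x^{\ast})^{\ast}=a^{\ast}x=0$. Hence $x\in r(a^{\ast})$.

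\emph{Inclusion $r(a^{\ast})\subseteq l(a)$.} Let $y\in r(a^{\ast})$, so $a^{\ast}y=0$. Condition (1) applied to the nilpotent $a^{\ast}$ and to $y$ gives $a^{\ast}y^{\ast}=0$. Taking the involution yields $ya=0$, so $y\in l(a)$.

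For $(3)\Rightarrow(1)$, suppose $a\in N(R)$ and $ab=0$, so $b\in r(a)$. Apply (3) to the nilpotent element $a^{\ast}$ in place of $a$. This gives $l(a^{\ast})=r((a^{\ast})^{\ast})=r(a)$, so $b\in l(a^{\ast})$, that is, $ba^{\ast}=0$. Taking the involution gives $ab^{\ast}=0$, which is (1).

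I do not expect a real obstacle. The only step needing care is invoking the hypothesis at $a^{\ast}$ rather than $a$, and this is legitimate exactly because $N(R)^{\ast}=N(R)$. As the remark after Theorem \ref{thm2.04} notes for the analogous results, the argument also shows that the equality in (3) could be weakened to a one-sided inclusion.
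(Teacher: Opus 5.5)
Your proof is correct and is essentially the paper's argument. The $(1)\Rightarrow(3)$ element chase is identical. In $(3)\Rightarrow(1)$ you use the inclusion $r(a)\subseteq l(a^{\ast})$ and then apply the involution, whereas the paper applies the involution first ($b^{\ast}a^{\ast}=0$) and uses $l(a^{\ast})\subseteq r(a)$; this is only a reordering. Your explicit observation that $N(R)^{\ast}=N(R)$ states a step the paper uses without comment.
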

\begin{proof}
$(1) \Rightarrow (2)$.
For any $x\in l(a)$, so $xa=0$. Taking involution yields $a^{\ast}x^{\ast}=0$.
By condition (1), we get $a^{\ast}x=0$, which means $x\in r(a^{\ast})$. Thus $l(a)\subseteq r(a^{\ast})$.

Conversely, take arbitrary $y\in r(a^{\ast})$, so $a^{\ast}y=0$.
Applying condition (1) to this identity gives $a^{\ast}y^{\ast}=0$. Taking involution, we obtain $ya=0$, so $y\in l(a)$.
Therefore $r(a^{\ast})\subseteq l(a)$.
Combining the two inclusions, we conclude $l(a)=r(a^{\ast})$.

$(2) \Rightarrow (1)$.
Suppose $ab=0$. Taking involution on both sides yields $b^{\ast}a^{\ast}=0$, so $b^{\ast}\in l(a^{\ast})$.
By condition (2), $l(a^{\ast})=r(a)$, hence $b^{\ast}\in r(a)$, i.e., $ab^{\ast}=0$.
By definition, $R$ is left nil $\ast$-reversible.
\end{proof}

We now give a proof for the following implication:
\begin{center}
$ab=0$ implies $ab^{\ast}=0$ $\quad\Longrightarrow\quad$ $ab\in P(R)$ implies $ab^{\ast}\in P(R)$.
\end{center}
At present, it remains unknown whether the converse implication holds.

\begin{proposition}\label{prop4.12}
Let $p\in P(R)$ and $r_1,r_2\in R$. Then
\[
\bigl(ab=0\implies ab^{\ast}=0\bigr) \Longrightarrow \bigl(ab\in P(R)\implies ab^{\ast}\in P(R)\bigr).
\]
\end{proposition}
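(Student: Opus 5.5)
The plan is to read the statement with the standing convention of this section, namely $a\in N(R)$ and $b\in R$. Assume that for all $a\in N(R)$ and $b\in R$, $ab=0$ implies $ab^{\ast}=0$. I will show that every $ab\in P(R)$ with $a$ nilpotent must in fact equal $0$; then the hypothesis gives $ab^{\ast}=0\in P(R)$. This mirrors the proofs of Proposition~\ref{prop4.06} and Proposition~\ref{prop4.08}: first force projections to be central, then use nilpotency of $a$ to kill $ab$.

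\textbf{Step 1 (projections are central).} I will use the annihilator form of the hypothesis from Proposition~\ref{prop4.11}: $l(x)=r(x^{\ast})$ for every $x\in N(R)$. Fix $p\in P(R)$ and $r\in R$, and put $x=pr(1-p)$. Then $x^{2}=0$, so $x\in N(R)$, and $xp=0$, so $p\in l(x)=r(x^{\ast})$. Since $x^{\ast}=(1-p)r^{\ast}p$, this gives $(1-p)r^{\ast}p=0$. As $r$ is arbitrary, $(1-p)sp=0$ for all $s\in R$. The same argument with the projection $1-p$ in place of $p$ gives $ps(1-p)=0$ for all $s$. Hence $ps=psp=sp$, so $P(R)\subseteq C(R)$. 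One could also argue directly from the hypothesis with $a=x$ and $b=p$, but the annihilator version seems cleaner.

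\textbf{Step 2 (nilpotency kills $ab$).} Let $e=ab\in P(R)$ and choose $n$ with $a^{n}=0$. Since $e$ is central, $e^{2}=e\cdot ab=a\,e\,b=a^{2}b^{2}$. Inductively, $e^{k}=a\,e^{k-1}\,b=a^{k}b^{k}$. Thus $e=e^{n}=a^{n}b^{n}=0$, and the hypothesis yields $ab^{\ast}=0\in P(R)$.

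The main obstacle is Step~1. The naive choice $b=p$ in the hypothesis only returns $xp^{\ast}=xp=0$, which carries no new information. The trick is to apply the annihilator identity to $x$ and read off information about $x^{\ast}$, not about $x$ itself. Once centrality of projections is established, Step~2 is the same routine computation already used in Proposition~\ref{prop4.06}.
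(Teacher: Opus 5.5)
\textbf{There is a genuine gap: Step 1 misreads the annihilators, and its conclusion is actually false under the hypothesis.}

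In this paper $l(x)=\{y: yx=0\}$ and $r(x)=\{y: xy=0\}$ (see the proof of Lemma~\ref{lem2.02}). So $xp=0$ says $p\in r(x)$, not $p\in l(x)$. Proposition~\ref{prop4.11} applied to the nilpotent $x^{\ast}$ gives $r(x)=l(x^{\ast})$. Hence all you learn is $px^{\ast}=0$, which holds trivially because $px^{\ast}=p(1-p)r^{\ast}p$. To obtain $x^{\ast}p=0$ you read $l(x)$ as $\{y:xy=0\}$ but $r(x^{\ast})$ as $\{y:x^{\ast}y=0\}$. Both are treated as sets of right multipliers, which no consistent convention allows.

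The hypothesis does not force projections to be central. Take a field $F$ and let $R$ consist of the matrices $\begin{pmatrix}\alpha & m\\ n&\beta\end{pmatrix}$ over $F$. Multiply as usual, except that the cross terms $mn'$ and $nm'$ are dropped. This is the trivial extension of $F\times F$ by $Fe_{12}\oplus Fe_{21}$, with transpose as involution.
\begin{itemize}
\item The nilpotents are exactly the zero-diagonal matrices.
\item For such $a$, the product $ab=\begin{pmatrix}0& m\beta\\ n\alpha&0\end{pmatrix}$ depends only on the diagonal of $b$, and $\ast$ preserves that diagonal. So $ab^{\ast}=ab$ and your hypothesis holds.
\item Yet $p=e_{11}\in P(R)$ satisfies $(1-p)e_{21}p=e_{21}\neq 0$.
\item Concretely, $x=pe_{12}(1-p)=e_{12}$ has $xp=0$ but $x^{\ast}p=e_{21}\neq0$.
\end{itemize}
Consequently your Step 2 is also unsupported as written, since $e\cdot ab=a\,e\,b$ uses $ea=ae$.

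\textbf{The fix, which is the paper's route.} The hypothesis gives a weaker ``sandwich'' identity, and that identity suffices for Step 2. The element $pr_1(1-p)$ has square zero, and $pr_1(1-p)\cdot pr_2^{\ast}=0$. The hypothesis then yields $pr_1(1-p)r_2p=0$, i.e.\ $pr_1r_2p=pr_1pr_2p$ for all $r_1,r_2\in R$.

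Now let $e=ab\in P(R)$ and $a^n=0$. We have $e=e^3=eabe$. For each $k$,
\[
ea^kb^ke=ea^k\,e\,b^ke=ea^{k+1}b^{k+1}e,
\]
using the sandwich identity with $r_1=a^k$ and $r_2=b^k$. Hence $e=ea^nb^ne=0$. The hypothesis then gives $ab^{\ast}=0\in P(R)$.
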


\begin{proof}
Take any $p\in P(R)$ and $r_1,r_2\in R$. We have
\[
pr_1(1-p)pr_2^{\ast}=0.
\]
Applying the hypothesis yields
\[
pr_1(1-p)\bigl[pr_2^{\ast}\bigr]^{\ast}=pr_1(1-p)r_2 p=0,
\]
which implies $pr_1 r_2 p = pr_1 p r_2 p$.

Now suppose $ab\in P(R)$. We compute:
\[
ab=(ab)ab(ab)=(ab)a(ab)b(ab)=aba^2 b^2 ab.
\]
Using the identity $pr_1 r_2 p = pr_1 p r_2 p$ repeatedly, we get
\[
aba^2 b^2 ab=(ab)a^2 b^2(ab)=(ab)a^2(ab)b^2(ab)=aba^3 b^3 ab.
\]
Iterating this procedure, we eventually obtain
\[
ab=aba^n b^n ab=0,
\]
since $a\in N(R)$.
Invoking the hypothesis once again, we conclude $ab^{\ast}=0\in P(R)$.
This completes the proof.
\end{proof}

\begin{corollary}\label{cor4.13}
Let $R$ be a $\ast$-ring, and let $a\in N(R)$, $b\in R$. The following statements are equivalent:

(1) $R$ is nil $\ast$-reversible.

(2) $ab \in P(R)$ implies that $ab^{\ast} \in P(R)$ and $b^{\ast}a \in P(R)$.
\end{corollary}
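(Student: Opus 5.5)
The plan is to prove the two implications separately, reducing everything to the case $ab=0$. In a nil-$\ast$-reversible ring, and more generally under the hypothesis of Proposition \ref{prop4.12}, the condition $ab\in P(R)$ with $a\in N(R)$ forces $ab=0$.

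For $(1)\Rightarrow(2)$, assume $R$ is nil-$\ast$-reversible. By Proposition \ref{prop4.06}, the implication $(1)\Rightarrow(3)$ there, $ab\in P(R)$ gives $ab^{\ast}=b^{\ast}a\in P(R)$. In particular both $ab^{\ast}$ and $b^{\ast}a$ lie in $P(R)$, which is (2). To avoid leaning on the $(2)\Rightarrow(3)$ step of Proposition \ref{prop4.06}, we can argue directly instead:
\begin{itemize}
\item By Lemma \ref{lem4.02}, $R$ is nil-reversible, so Lemma \ref{lem4.05} makes $R$ abelian.
\item Hence $ab\in P(R)\subseteq E(R)$ is central. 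With $a^n=0$ we get $ab=(ab)^n=a^nb^n=0$.
\item Nil-$\ast$-reversibility then gives $ab^{\ast}=b^{\ast}a=0\in P(R)$.
\end{itemize}

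For $(2)\Rightarrow(1)$, assume (2) and let $a\in N(R)$ with $ab=0$. Then $ab\in P(R)$, so (2) gives $ab^{\ast}\in P(R)$ and $b^{\ast}a\in P(R)$, and we must show both are $0$.

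For $b^{\ast}a$ we use Proposition \ref{prop4.08}, $(2)\Rightarrow(1)$. Its hypothesis is exactly the second half of our (2), so $b^{\ast}a=0$. That proof also shows $P(R)\subseteq C(R)$.

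For $ab^{\ast}$ we use centrality. Since $ab^{\ast}\in P(R)\subseteq C(R)$, we have $ab^{\ast}=(ab^{\ast})^n$. Rearranging central factors, $(ab^{\ast})^n=a^n(b^{\ast})^n=0$. For the rearrangement, write $(ab^{\ast})^2=a(b^{\ast}a b^{\ast})=a\,ab^{\ast}\,b^{\ast}$, using that $ab^{\ast}$ commutes with $b^{\ast}$, and induct to get $(ab^{\ast})^n=a^n(b^{\ast})^n$. Alternatively, $(ab^{\ast})^2=a(b^{\ast}a)b^{\ast}=0$ because $b^{\ast}a=0$, and then $ab^{\ast}=(ab^{\ast})^2=0$ with no centrality needed. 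Thus $ab^{\ast}=b^{\ast}a=0$ and $R$ is nil-$\ast$-reversible.

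The main obstacle is making sure the cited results are used with exactly their hypotheses, namely that (2) alone yields the hypothesis of Proposition \ref{prop4.08}(2). It does, since that hypothesis is the $b^{\ast}a\in P(R)$ half of (2). The $ab^{\ast}$ half then follows from the identity $(ab^{\ast})^2=a(b^{\ast}a)b^{\ast}$, which is the cleanest route and the one I would write up.
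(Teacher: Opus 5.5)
Your proof is correct. It agrees with the paper on the first half of $(2)\Rightarrow(1)$: both invoke Proposition \ref{prop4.08} to get $b^{\ast}a=0$ and $P(R)\subseteq C(R)$.

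The routes diverge at the step $ab^{\ast}=0$. The paper applies (2) to the pair $(a^{\ast},b)$, using $a^{\ast}b=0$, to get $a^{\ast}b^{\ast}=(ba)^{\ast}\in P(R)$. Hence $ba\in P(R)\subseteq C(R)$ and $ba=b^{n}a^{n}=0$, which establishes nil-reversibility. The paper then combines this with $b^{\ast}a=0$ via $a^{\ast}b=0\Rightarrow ba^{\ast}=0\Rightarrow ab^{\ast}=0$. You instead apply the first half of (2) to the original pair $(a,b)$ to get $ab^{\ast}\in P(R)$. You then kill it with the identity $(ab^{\ast})^{2}=a(b^{\ast}a)b^{\ast}=0$. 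So you use centrality only once, inside Proposition \ref{prop4.08}, and need no detour through nil-reversibility. The paper's route yields nil-reversibility of $R$ directly from (2) as a by-product, but that also follows from (1) by Lemma \ref{lem4.02}, so nothing is lost.

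For $(1)\Rightarrow(2)$, the paper simply treats this as the easy direction; it is Proposition \ref{prop4.06}, $(1)\Rightarrow(3)$. Your direct argument via Lemmas \ref{lem4.02} and \ref{lem4.05} is a valid self-contained substitute.
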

\begin{proof}
We only need to verify $(2)\Rightarrow(1)$. Assume condition (2) holds and take $a\in N(R),b\in R$ with $ab=0$. By Proposition \ref{prop4.08}, $b^{\ast}a=0$, and $P(R)\subseteq C(R)$.
From $b^{\ast}a=0$, we take involution to get $a^{\ast}b=0$. Applying hypothesis (2), $a^{\ast}b^{\ast}=ba\in P(R)$.
Since $P(R)\subseteq C(R)$ and $a\in N(R)$, there exists $n\in\mathbb{N}$ such that $ba=b^n a^n=0$.
This shows $R$ is nil-reversible. Combined with $b^{\ast}a=0$, we obtain $ab^{\ast}=0$, so $R$ is nil-$\ast$-reversible.
\end{proof}

\section*{Acknowledge}
The authors sincerely thank Prof. Junchao Wei for his valuable comments and suggestions,
which greatly improved the presentation of this paper.
This work is supported by the National Natural Science Foundation of China (12471133),
the Natural Science Foundation of Jiangsu Province (BK20200944, BK20220589),
and the Anhui Provincial Department of Education Natural Science Research Project (2025AHGXZK30855).


\begin{thebibliography}{s2}

\bibitem{KUHH1}  G. Kafkas, B. Ungor, S. Halicioglu and A. Harmanci,
Generalized symmetric rings,
Algebra Discrete Math. \textbf{12} (2011) 78-84.

\bibitem{L1} J. Lambek,
On the representation of modules by sheaves of factor modules,
Canad. Math. Bull. \textbf{14} (1971) 359-368.

\bibitem{M1} G. Marks,
Reversible and symmetric rings,
J. Pure Appl. Algebra \textbf{174} (2002) 311-318.

\bibitem{MW1} F. Meng and J. Wei,
Some properties of $e-$symmetric rings,
Turk. J. Math.  \textbf{42} (2018) 2389-2399.

\bibitem{MW2} F. Meng and J. Wei,
$(g, e)-$symmetric rings,
Algebra Colloq. \textbf{31} (2024) 263-270.

\bibitem{MW3} F. Meng and J. Wei,
$e-$symmetric rings,
Commun. Contemp. Math. \textbf{20} (2018) Paper No.1750039, 8 pp.

\bibitem{MW5}  F. Meng, J. Wei, and  R. Chen,
Weak $e-$symmetric rings,
Comm. Algebra \textbf{51} (2023) 3042-3050.

\bibitem{OH1} L. Ouyang and H. Chen,
On weak symmetric rings,
Comm. Algebra \textbf{38} (2010)  697-713.

\bibitem{MW4}  J. Wei,
Generalized weakly symmetric rings,
J. Pure Appl. Algebra \textbf{218} (2014) 1594-1603.

\bibitem{WFW1} X.  Wang, S. Fan and J. Wei,
$\ast$-symmetric rings,
J. Algebra Appl. \textbf{25} (2026) Paper No.2650098, 11 pp.
\end{thebibliography}
\end{document}